\newcommand{\udots}{\mathinner{\mkern1mu\raise1pt\vbox{\kern7pt\hbox{.}}%
\mkern2mu\raise4pt\hbox{.}\raise7pt\hbox{.}\mkern1mu}}
\theoremstyle{plain}
\newtheorem{proposition}{Proposition}
\newtheorem{lemma}[proposition]{Lemma}
\newtheorem{corollary}[proposition]{Corollary}
\newtheorem{theorem}[proposition]{Theorem}
\theoremstyle{definition}
\newtheorem{definition}[proposition]{Definition}
\newtheorem{example}[proposition]{Example}
\newtheorem{remark}[proposition]{Remark}
\DeclareMathOperator{\diag}{diag}
\newcommand{\C}{\mathbb{C}}
\newcommand{\Z}{\mathbb{Z}}
\newcommand{\Q}{\mathbb{Q}}
\newcommand{\R}{\mathbb{R}}
\newcommand{\K}{\mathbb{K}}
\newcommand{\X}{\mathcal{X}}
\newcommand{\Mat}{\mathrm{Mat}}
\newcommand{\GL}{\mathrm{GL}}
\newcommand{\AT}{\scriptsize\raisebox{\depth}{\rotatebox{180}{$T$}} }
\title{\textsc{X-matrices\\
}}
\author{Emanuele Borgonovo  \and Marco Artusa  \and Elmar Plischke \and Francesco
	Vigan\`o}
\date{}
\begin{document}
	
	\maketitle
	
	\noindent \textbf{Emanuele Borgonovo} \\
	Department of Decision Sciences and \\
	Bocconi Institute for Data Science and Analytics \\
	Via Roentgen 1, Milano, Italy \\
	\texttt{emanuele.borgonovo@unibocconi.it}
	\medskip
	
	\noindent \textbf{Marco Artusa} \\
	Dipartimento di Matematica \\
	Università degli Studi di Milano \\
	Via Saldini 50, Milano, Italy \\
	\texttt{m.artusa@clipnotes.it}
	\medskip
	
	\noindent \textbf{Elmar Plischke} \\
	Institut für Endlagerforschung \\
	Fachgebiet Endlagersysteme \\
	Technische Universität Clausthal \\
	Adolph-Roemer-Str.~2a, Clausthal-Zellerfeld, Germany \\
	\texttt{elmar.plischke@tu-clausthal.de}
	\medskip
	
	\noindent \textbf{Francesco Vigan\'o} \\
	Dipartimento di Matematica \\
	Università degli Studi di Milano \\
	Via Saldini 50, Milano, Italy \\
	\texttt{f.vigano@clipnotes.it}
	\medskip

	\begin{abstract}
		\noindent We evidence a family $\X$ of square matrices over a field $\K$, whose elements 
		will be called X-matrices. We show that this family is shape invariant under multiplication as well as transposition. We show that $\X$ is a (in general non-commutative) subring 
		of $\Mat(n,\K)$. Moreover, we analyse the condition for a matrix $A \in \X$ to be 
		invertible in $\X$. We also show that, if one adds a symmetry condition called here 
		bi-symmetry, then the set $\X^b$ of bi-symmetric X-matrices is a commutative subring of $\X$. We propose results for eigenvalue inclusion, showing that for X-matrices eigenvalues lie exactly on the boundary of Cassini ovals. It is shown that any monic polynomial on $ \R $ can be associated with a companion matrix in $ \X $.\\
		\textbf{Classification (AMS/MSC2010):} 15B99, 15A30, 15A21, 16S50.\\
		\textbf{Keywords:} Matrices; Anti-symmetry; Matrix Rings; Eigenvalue Inclusion.
	\end{abstract}
	

	\section{Introduction}
Diagonal matrices are shape-invariant under all main matrix 
	operations (sum, product, inversion, transposition). However, other popular families 
	of matrices do not show this invariance property. For instance, symmetric matrices are 
	closed with respect to sum and transposition, but the product of two
	symmetric matrices may be not symmetric; triangular matrices are not invariant under 
	transposition (upper triangular matrices become lower triangular). 
	Matrix sets such as $\mathrm{SL}(n,\K)$, $\mathrm{O}(n,\R)$ are subgroups of the 
	invertible matrices which are invariant under transposition; nonetheless, they are 
	not closed with respect to the sum. The purpose of this paper is to find an easy-to-handle 
	family of matrices which is both a sub-ring of $\Mat(n,\K)$ and shape-invariant under 
	multiplication and transposition as well. 
	We recall that shape invariance is important in numerical linear algebra, 
	when one wishes to keep the sparsity pattern intact \cite{ArbeGolu95}.  
	In this respect, we also note that while shape invariance under matrix multiplication 
	is frequently searched for numerical purposes, the family of matrices we are to discuss 
	is not only invariant under matrix multiplication but also under transposition and inversion.
		
	After a preliminary investigation of the notion of anti-transposition, we introduce the 
	family of X-matrices.  We prove that these matrices form a sub-ring of the square matrices 
	of size $n$ invariant under transposition. Regarding inversion, we focus on the inverse of 
	an X-matrix, when it exists: in fact, even if the family of X-matrices is a subring of the 
	ring of $n\times n$ matrices, 
	it is not obvious that the inverse of an X-matrix is still an X-matrix. However, we 
	show that this is indeed the case. In this result, it plays a favorable role the fact 
	the determinant of X-matrices can be expressed as a product of determinants of 
	inner $ 2 \times 2 $ quadratic submatrices.  
	
    The notion of anti-transposition forms a direct link to 
	skew-symmetric and anti-commutative matrices. These are pervading matrix theory 
	right from its beginning:
   ``It may be noticed in passing, that if $L$, $M$ are skew convertible matrices 
    of the order 2, and if these matrices are also such that 
    $L^2=-1$, $M^2=-1$, then putting $N=LM=-ML$, we obtain  
    \begin{alignat}{3}
    L^2&=-1, & M^2&=-1, &N^2&=-1,\\
    L&=MN=-NM, & M&=NL=-NL, & N&=LM=-ML  
    \end{alignat}
    which is a system of relations precisely similar to that in the theory of quaternions'' 
    \cite[p. 32]{Cayl89}. 

	We then discuss the commutativity of the product between X-matrices. We are able to find 
	a non-trivial sub-ring of the X-matrices which is commutative. In order to do this we 
	will be motivated to add a symmetry conditions, called bi-symmetry, that characterizes 
	the sub-ring of bi-symmetric X-matrices. 
	
	Because of the determinant structure, the characteristic polynomial of an X-matrix is 
	a formed by products of quadratic polynomials. This structure leads to an eigenvalue 
	inclusion result for this type of matrices that shows that the eigenvalues of X-matrices 
	lie exactly on the boundary of the Cassini ovals.
	
	The problem of locating the spectrum of a square matrix $A$ 
	without actually solving $\det(sI-A)=0$ has attracted much attention in numerical 
	linear algebra. The study of diagonally dominant matrices leads 
    to Gershgorin's disk theorem, refining the argument gives 
	algebraic curves encircling the matrix spectrum like Cassini's ovals of 
    Brauer or the lemniscates of Brualdi. From another viewpoint, 
	the convex hull of the Gershgorin disks coincides with
    the numerical range (or the field of values) 
	of a matrix with respect to the $\infty$-norm, 
	the set of all Rayleigh quotients obtained from dual pairs of the $\infty$-norm.
	The largest real part of the numerical range is an example of a matrix norm that is not an operator norm.
	
	Finally, we show that, for any monic polynomial on $ \mathbb{R} $ there exists a 
	corresponding X-shaped companion matrix. Thus, an X-matrix can be associated with any 
	polynomial in $ \R $, a feature which does not hold for diagonal matrices.

We note that a member of this family appears naturally in the work of \cite[Equation (3), p. 4010)]{OwenVari20}  in association with the fitting of a two-factor regression model with interactions.

	\section{Notation and preliminary remarks}
	We denote by $\K$ a field, e.g.~$\K= \Q$, $\R$, $\C$. $\Mat(n,\K)$ is the ring of 
	square matrices 
	$n\times n$ with elements in $\K$. 
	We recall that the operation of transposition turns a matrix $A=(a_{i,j})$ into 
	$A^T=(a_{j,i})$, 
	and satisfies the relation $(AB)^T=B^T A^T$. Here we define the anti-transpose of 
	a matrix and 
	show a basic property related to this operation.
	\begin{definition}\label{def:antitrans}
		Let $A=(a_{i,j}) \in \Mat(n, \K)$. We define the \emph{anti-transpose} of $A$, 
		denoted by $A^{\AT}$, the matrix whose general element is given by
		\[
		(A^{\AT})_{i,j}=a_{n-j+1, n-i+1}.
		\]
	\end{definition}
	\begin{example}
		Given 
		$A=
		\begin{bmatrix}
		1 & 2 & 3 \\
		4 & 5 & 6 \\
		7 & 8 & 9 \\
		\end{bmatrix}$, its anti-transpose is
		$A^{\AT}=
		\begin{bmatrix}
		9 & 6 & 3 \\
		8 & 5 & 2 \\
		7 & 4 & 1 \\
		\end{bmatrix}.$		
	\end{example}
	Let $J$ denote the \emph{anti-identity} in 
	$\Mat(n,\K)$, i.e.
	\[
	J=(j_{i,j})_{i,j=1,\dots,n}=
	\begin{cases}
	1 &\text{if} \ j=n-i+1 \\
	0& \text{otherwise}
	\end{cases}.
	\]
	\begin{lemma}
		The antitranspose and transpose of a matrix are related by $A^{\AT}=(JAJ)^T$.
	\end{lemma}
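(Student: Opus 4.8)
The plan is to verify the identity entrywise, after first recording the two elementary facts about the anti-identity $J$: it is symmetric, $J^T = J$, and it is an involution, $J^2 = I$, both immediate from the definition. The crucial observation is the effect of multiplying by $J$ on a general $B = (b_{i,j}) \in \Mat(n,\K)$: left multiplication reverses the order of the rows, so $(JB)_{i,j} = b_{n-i+1,\,j}$, while right multiplication reverses the order of the columns, so $(BJ)_{i,j} = b_{i,\,n-j+1}$. Each of these is a one-line check from the definition of $J$.

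Applying the two facts in succession gives $(JAJ)_{i,j} = a_{n-i+1,\,n-j+1}$. Transposing interchanges the roles of the two indices, so
\[
\bigl((JAJ)^T\bigr)_{i,j} = (JAJ)_{j,i} = a_{n-j+1,\,n-i+1},
\]
which is exactly $(A^{\AT})_{i,j}$ by Definition \ref{def:antitrans}. Since the entries agree for all $i,j \in \{1,\dots,n\}$, the matrices $A^{\AT}$ and $(JAJ)^T$ coincide, which is the claim.

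I do not anticipate any genuine obstacle: the argument is a short index chase, and the only thing that requires care is the order in which the two reversals are composed together with the off-by-one bookkeeping in the index $n-\cdot+1$. As a sanity check I would push the $3\times 3$ matrix of the preceding example through $JAJ$ and confirm that its transpose reproduces the displayed $A^{\AT}$. One could also give a basis-free formulation — conjugation by $J$ is the linear map that reverses the standard basis, so $JAJ$ represents $A$ in the reversed basis and transposition then yields the anti-transpose — but the entrywise verification is the most economical route.
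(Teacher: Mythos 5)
Your proof is correct and follows the same route as the paper's: the paper also argues that left multiplication by $J$ reverses the rows, right multiplication reverses the columns, and transposition swaps the indices, only stating this verbally where you carry out the entrywise index computation explicitly. Your version is simply a more detailed write-up of the same argument, and the index bookkeeping checks out.
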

	\begin{proof}
		Left-multiplication with $J$ reorders the rows from last to first and 
		right-multiplication with $J$
		reorders the columns from last to first, transposition exchanges the row and column 
		indices.
	\end{proof}
	These two immediate consequences follow.
	\begin{corollary}\label{cor:sym}
		If $A$ is symmetric then its anti-transpose is symmetric as well. 
		Moreover the equality $AJ=JA^{\AT}$ holds.
	\end{corollary}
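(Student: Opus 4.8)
The plan is to derive both assertions from the identity $A^{\AT}=(JAJ)^T$ proved in the preceding lemma, after first recording two elementary properties of the anti-identity $J$: it is symmetric, $J^T=J$, and it is an involution, $J^2=I$. Both are immediate from the fact that $J$ is the permutation matrix of the order-reversing permutation $i\mapsto n-i+1$, which is its own inverse. Using $(XYZ)^T=Z^TY^TX^T$ together with $J^T=J$, the lemma's identity can then be rewritten in the convenient form $A^{\AT}=JA^TJ$.

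For the first claim I would simply specialize to $A=A^T$: this gives $A^{\AT}=JAJ$, and transposing once more (again using $J^T=J$) yields $(A^{\AT})^T=JA^TJ=JAJ=A^{\AT}$, so $A^{\AT}$ is symmetric. For the second claim, starting from $A^{\AT}=JA^TJ$ and left-multiplying by $J$, the involution property collapses the leftmost factor: $JA^{\AT}=J^2A^TJ=A^TJ$. Invoking symmetry $A^T=A$ once more gives $JA^{\AT}=AJ$, which is the stated equality. (In passing one sees that the unconditional identity $JA^{\AT}=A^TJ$ holds for every $A\in\Mat(n,\K)$, and the symmetry hypothesis is used only to replace $A^T$ by $A$.)

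There is essentially no genuine obstacle here; the only point requiring care is the bookkeeping with the two properties of $J$ and the order reversal when transposing a triple product. Should a reader prefer a coordinate argument, both statements can equally be verified entrywise straight from Definition~\ref{def:antitrans}, computing $(A^{\AT})_{i,j}=a_{n-j+1,n-i+1}=a_{n-i+1,n-j+1}=(A^{\AT})_{j,i}$ in the symmetric case, but the matrix-identity route above is shorter and reusable.
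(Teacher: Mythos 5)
Your proof is correct and follows exactly the route the paper intends: the paper leaves Corollary~\ref{cor:sym} as an immediate consequence of the lemma $A^{\AT}=(JAJ)^T$, and you simply write out the details using $J^T=J$ and $J^2=I$ to get $A^{\AT}=JA^TJ$, whence both claims (and your side remark that $JA^{\AT}=A^TJ$ holds unconditionally) follow correctly.
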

	\begin{corollary}\label{prop:antitransposeprod}
		For $A,B\in \Mat(n,\K)$ it holds that $(AB)^{\AT}=B^{\AT} A^{\AT}$.
	\end{corollary}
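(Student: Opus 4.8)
The plan is to reduce everything to the ordinary transposition identity $(AB)^T=B^TA^T$ by means of the Lemma just proved. First I would record two elementary facts about the anti-identity $J$: it is symmetric, $J^T=J$, and it is an involution, $J^2=I$ (reversing the order of the rows, or of the columns, twice restores the original matrix). Combining $J^T=J$ with the identity $A^{\AT}=(JAJ)^T$ from the Lemma and the rule $(XYZ)^T=Z^TY^TX^T$ yields the convenient closed form $A^{\AT}=JA^TJ$, which will be the workhorse of the argument.

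With this in hand the corollary is a one-line computation: $(AB)^{\AT}=J(AB)^TJ=JB^TA^TJ=(JB^TJ)(JA^TJ)=B^{\AT}A^{\AT}$, where the third equality simply inserts $J^2=I$ between $B^T$ and $A^T$. Alternatively, one can argue directly at the level of entries via Definition \ref{def:antitrans}: the $(i,j)$ entry of $(AB)^{\AT}$ is $(AB)_{n-j+1,\,n-i+1}=\sum_k a_{n-j+1,k}\,b_{k,\,n-i+1}$, and the substitution $k=n-\ell+1$ rewrites this as $\sum_\ell (B^{\AT})_{i,\ell}\,(A^{\AT})_{\ell,j}$, i.e.\ the $(i,j)$ entry of $B^{\AT}A^{\AT}$.

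There is no genuine obstacle here; the only points requiring care are not to drop the order-reversal — just as for the ordinary transpose, anti-transposition is an anti-automorphism, not an automorphism, of $\Mat(n,\K)$ — and to invoke the symmetry $J^T=J$ and the involutivity $J^2=I$ at the right places. I would present the algebraic derivation as the main proof, since it also makes transparent that $A\mapsto A^{\AT}$ is conjugation of $A\mapsto A^T$ by $J$, and mention the entrywise computation as a remark.
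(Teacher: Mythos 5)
Your proof is correct and follows essentially the same route as the paper: both rest on the identity $A^{\AT}=(JAJ)^T$, the involutivity $J^2=I$, and the reversal rule for ordinary transposition, the only cosmetic difference being that you first rewrite $A^{\AT}$ as $JA^TJ$ (using $J^T=J$) before inserting $J^2$, while the paper inserts $J^2=I$ directly inside $(JABJ)^T$. The entrywise computation you mention as a remark is also valid, but it is not needed given the main argument.
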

	\begin{proof}
		The anti-identity $J$ is involutory, namely $J^2=I$, so that
		\[
		(AB)^{\AT}=(JABJ)^T=(JAJJBJ)^T=(JBJ)^T(JAJ)^T=B^{\AT} A^{\AT}.\qedhere
		\]
	\end{proof}
	
	
	\section{X-matrices}
	\begin{definition}
		We say that a matrix $A=(a_{i,j}) \in \Mat(n, \K)$ is an \emph{X-matrix} if
		\begin{equation}\label{X}
		a_{i,j}=0 \quad \text{whenever} \ i \neq j \ \text{and} \ i+j \neq n+1.
		\end{equation}
		We denote by $\X$ the subset of $\Mat(n,\K)$ whose elements are the X-matrices.
	\end{definition}
	\noindent In other words, a square matrix is an X-matrix if each element which 
	neither located on the diagonal nor on the anti-diagonal is 0.
	\begin{example}
		The matrices
		\[
		\begin{bmatrix}
		1 & 0 & 0 & 2 \\
		0 & 0 & 1 & 0 \\
		0 & 4 & 9 & 0 \\
		3 & 0 & 0 & 0 \\
		\end{bmatrix},
		\quad
		\begin{bmatrix}
		1 & 0 & 0 & 0 & 5 \\
		0 & 4 & 0 & 3 & 0 \\
		0 & 0 & 6 & 0 & 0 \\
		0 & 9 & 0 & 1 & 0 \\
		7 & 0 & 0 & 0 & 8 \\
		\end{bmatrix}
		\]
		are $4\times 4$ and $5\times 5$ X-matrices, respectively.
	\end{example}
	\noindent We are now going to evidence some first basic properties of X-matrices, 
	which motivate us in studying these nice-shaped objects.
	\begin{remark}\label{doublediag}
		Any X-matrix $A$ can be written as $A_1 I + A_2 J$ where $A_1, A_2$ are diagonal matrices, 
        $I$ is the identity matrix and $J$ is the anti-identity defined above. 
        Moreover, if $n$ is even (i.e.~the matrix has no central element), $A_1$ and $A_2$ are uniquely determined. 
        Conversely, if $A_1$ and $A_2$ are diagonal matrices, then $A=A_1 I + A_2 J$ is an X-matrix.
	\end{remark}
	\begin{lemma}
		If $A \in \X$, then $A^T \in \X$, as well as $A^{\AT} \in \X$.
	\end{lemma}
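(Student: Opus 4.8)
The statement asserts that the two reflected versions of an X-matrix are again X-matrices, and both follow from a short bookkeeping argument on indices; the plan is to prove the two claims in parallel, directly from the defining condition \eqref{X}.

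First, the transpose. Recall that $(A^T)_{i,j}=a_{j,i}$. The defining condition for membership in $\X$ is that the entry in position $(i,j)$ vanishes unless $i=j$ or $i+j=n+1$, and both of these conditions are visibly symmetric under the swap $(i,j)\mapsto(j,i)$. Hence, whenever $i\ne j$ and $i+j\ne n+1$, we also have $j\ne i$ and $j+i\ne n+1$, so $a_{j,i}=0$ because $A\in\X$; therefore $A^T\in\X$. Second, the anti-transpose. By Definition \ref{def:antitrans} one has $(A^{\AT})_{i,j}=a_{n-j+1,\,n-i+1}$. Setting $p=n-j+1$ and $q=n-i+1$, the map $(i,j)\mapsto(p,q)$ is an involution of $\{1,\dots,n\}^2$, and one checks directly that $p=q\iff i=j$ while $p+q=(n-j+1)+(n-i+1)=2n+2-i-j$, so $p+q=n+1\iff i+j=n+1$. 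Thus if $i\ne j$ and $i+j\ne n+1$ then $p\ne q$ and $p+q\ne n+1$, whence $a_{p,q}=0$ since $A\in\X$, i.e.\ $(A^{\AT})_{i,j}=0$, so $A^{\AT}\in\X$.

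An alternative, more structural route — closer in spirit to Remark \ref{doublediag} — is available. Write $A=A_1 I + A_2 J$ with $A_1,A_2$ diagonal. Since diagonal matrices are symmetric and $I^T=I$, $J^T=J$, one gets $A^T=A_1 I+(A_2 J)^T=A_1 I + J A_2$; but $J A_2$ is again an anti-diagonal matrix, hence of the form $A_2' J$ with $A_2'$ diagonal, so $A^T=A_1 I + A_2' J\in\X$ by Remark \ref{doublediag}. For the anti-transpose one combines this with the identity $A^{\AT}=(JAJ)^T$ established earlier, noting that $JAJ$ has $(i,j)$-entry $a_{n+1-i,\,n+1-j}$ and therefore lies in $\X$ by the same index check.

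I expect no genuine obstacle here; the one thing to watch is getting the affine substitution in the anti-transpose case exactly right, that is, confirming that the reflection $(i,j)\mapsto(n-j+1,\,n-i+1)$ maps the forbidden off-diagonal/off-anti-diagonal region \emph{onto} itself rather than merely \emph{into} the full square, which is precisely the content of the two equivalences $p=q\iff i=j$ and $p+q=n+1\iff i+j=n+1$ verified above.
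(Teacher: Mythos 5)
Your proof is correct and follows essentially the same route as the paper's: a direct check that the index swaps $(i,j)\mapsto(j,i)$ and $(i,j)\mapsto(n-j+1,\,n-i+1)$ preserve the defining conditions $i=j$ or $i+j=n+1$, so the zero pattern is preserved (indeed your phrasing in terms of vanishing entries is cleaner than the paper's, which loosely says ``non-zero whenever'' where it means the entry can only be non-zero on the diagonal or anti-diagonal). The alternative argument via the decomposition $A=A_1 I+A_2 J$ is a nice extra but not needed.
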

	\begin{proof}
		Geometrically speaking, it should be clear that the statement is true looking at the examples above. Anyway, assume that $A \in \X$. Then
		\[
		(A^{T})_{i,j}=a_{j,i}
		\]
		is non-zero whenever $j \neq i$ and $j+i\neq n+1$. Therefore, $A^T \in \X$. Similarly,
		\[
		(A^{\AT})_{i,j}=a_{n-j+1,n-i+1}
		\]
		is non-zero if $n-j+1 \neq n-i+1$ and $n-j+1 + n-i+1 \neq n+1$. These conditions are equivalent to $i\neq j$ and $i+j \neq n+1$; thus, $A^{\AT} \in \X$.
	\end{proof}
	\begin{lemma}\label{Le:Prod}
		Assume that $A, B \in \X$. Then $A+B$, $-A$, $AB$ are elements of $\X$.
	\end{lemma}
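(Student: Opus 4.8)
The plan is to use the structural decomposition from Remark~\ref{doublediag}, which says every X-matrix can be written as $A = A_1 I + A_2 J$ with $A_1, A_2$ diagonal. Closure under addition and negation is essentially immediate: if $A = A_1 I + A_2 J$ and $B = B_1 I + B_2 J$, then $A + B = (A_1+B_1) I + (A_2+B_2) J$ and $-A = (-A_1) I + (-A_2) J$, and since diagonal matrices are closed under sum and negation, the converse direction of Remark~\ref{doublediag} gives that these are again X-matrices. Alternatively, one can argue directly from the defining condition \eqref{X}: the zero pattern is preserved under entrywise addition and sign change.

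The substance of the lemma is closure under multiplication. First I would compute $AB = (A_1 I + A_2 J)(B_1 I + B_2 J) = A_1 B_1 + A_1 B_2 J + A_2 J B_1 + A_2 J B_2 J$. The two clean terms are $A_1 B_1$ (diagonal times diagonal, hence diagonal) and $A_2 J B_2 J$. For the cross terms I would invoke the key fact that conjugation by $J$ sends a diagonal matrix to a diagonal matrix: concretely, if $D = \diag(d_1, \dots, d_n)$ then $JDJ = \diag(d_n, \dots, d_1)$, since $J$ reverses both row and column order. Hence $J B_1 = (J B_1 J) J = B_1' J$ where $B_1' = J B_1 J$ is diagonal, so $A_2 J B_1 = (A_2 B_1') J$ is a diagonal matrix times $J$. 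Similarly $A_2 J B_2 J = A_2 (J B_2 J) = A_2 B_2'$ is diagonal, where $B_2' = J B_2 J$. Collecting terms, $AB = (A_1 B_1 + A_2 B_2') I + (A_1 B_2 + A_2 B_1') J$, a sum of a diagonal matrix times $I$ and a diagonal matrix times $J$, which is an X-matrix by Remark~\ref{doublediag}.

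I do not anticipate a serious obstacle; the only mild care needed is the bookkeeping around the identity $JDJ = \diag(d_n,\dots,d_1)$ and the resulting rewriting $JD = (JDJ)J$, which relies on $J^2 = I$ (already noted in the proof of Corollary~\ref{prop:antitransposeprod}). An alternative, slightly more pedestrian route would bypass Remark~\ref{doublediag} entirely and verify \eqref{X} for $AB$ directly: writing $(AB)_{i,j} = \sum_k a_{i,k} b_{k,j}$, the nonzero terms require $a_{i,k} \neq 0$ (so $k \in \{i, n+1-i\}$) and $b_{k,j} \neq 0$ (so $j \in \{k, n+1-k\}$), and chasing these four cases shows $j \in \{i, n+1-i\}$, i.e.\ the X-pattern is preserved. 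The decomposition approach is cleaner and also sets up the later discussion of when the product is commutative, so I would present that one.
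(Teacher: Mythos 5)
Your proof is correct and follows essentially the same route as the paper: decompose $A=A_1I+A_2J$, $B=B_1I+B_2J$ via Remark~\ref{doublediag}, expand, and move $J$ past the diagonal factors. The paper phrases the commutation step through Corollary~\ref{cor:sym} (the identity $DJ=JD^{\AT}$), which for diagonal $D$ is exactly your observation that $JDJ=\diag(d_n,\dots,d_1)$, so the two arguments coincide.
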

	\begin{proof}
		The fact that $A+B$ and $-A$ belong to $\X$ follows immediately by the definition of matrix sum. In order to prove that $AB$ belongs to $\X$, use the Remark \ref{doublediag} to write 
		\[
		A=A_1 I + A_2 J, \quad B=B_1 I + B_2 J,
		\]
		where $A_1, A_2, B_1, B_2$ are diagonal matrices. Then, by Corollary \ref{cor:sym},
		\[
		AB=(A_1 I + A_2 J)(B_1 I + B_2 J)=(A_1 B_1 + A_2 B_2^{\AT})I + (A_2 B_1^{\AT} + A_1 B_2) J
		\]
		is still an X-matrix.
	\end{proof}
	
	\begin{proposition}
		$\X$ is a subring of $\Mat(n,\K)$ which is closed under transposition and anti-transposition.
	\end{proposition}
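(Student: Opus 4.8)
The plan is to assemble the proposition directly from the lemmas already proved, since all the work has effectively been done. First I would note that $\X$ is a nonempty subset of $\Mat(n,\K)$ (it contains the zero matrix, and indeed the identity, since $I$ is an X-matrix). Then I would invoke Lemma~\ref{Le:Prod}: it shows that $\X$ is closed under addition, under negation, and under multiplication. Together with the fact that $\X$ contains $0$, this is precisely the definition of a subring (using the convention that a subring need not contain $1$, though here it does anyway). To be thorough I might remark explicitly that $I \in \X$, so $\X$ is a unital subring.

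Next I would address the closure under transposition and anti-transposition, which is exactly the content of the lemma immediately preceding Lemma~\ref{Le:Prod}: that lemma states that if $A \in \X$ then $A^T \in \X$ and $A^{\AT} \in \X$. So the proof is a two-line citation: \emph{By the preceding lemmas, $\X$ is closed under sum, negation, and product and contains the identity, hence is a subring of $\Mat(n,\K)$; closure under transposition and anti-transposition was established above.}

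There is no real obstacle here — the proposition is a packaging statement that collects Lemma~\ref{Le:Prod} and the transposition lemma into a single headline result. The only thing to be careful about is the convention on subrings: if the paper wants ``subring'' to mean ``unital subring sharing the identity of $\Mat(n,\K)$'', I should point out that $I_n$ satisfies \eqref{X} trivially (its only nonzero entries are on the diagonal), so $I_n \in \X$ and the shared-identity condition holds. With that observation in place, the proof is complete.

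\begin{proof}
	The zero matrix and the identity matrix $I_n$ both satisfy \eqref{X} (for $I_n$, all nonzero entries lie on the diagonal), so $\X$ is a nonempty subset of $\Mat(n,\K)$ containing the multiplicative identity. By Lemma~\ref{Le:Prod}, $\X$ is closed under addition, negation, and multiplication; hence $\X$ is a (unital) subring of $\Mat(n,\K)$. Closure under transposition and anti-transposition is the content of the lemma preceding Lemma~\ref{Le:Prod}.
\end{proof}
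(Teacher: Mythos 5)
Your proof is correct and follows essentially the same route as the paper: cite Lemma~\ref{Le:Prod} for closure under sum, negation, and product, the preceding lemma for closure under transposition and anti-transposition, and observe that the zero and identity matrices belong to $\X$. Your explicit remark that $I_n$ satisfies \eqref{X} is a harmless (and slightly more careful) addition to what the paper leaves as ``clearly''.
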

	\begin{proof}
		The result comes from the previous lemmas, in addition to the fact that the zero matrix and the identity matrix clearly belong to $\X$.
	\end{proof}
	The above analysis shows that X-matrices are invariant under a number of matrix operations, including sum and product.
	\begin{corollary}
		Let $ m\in \mathbb{N} $. If $A\in \X$, then $ A^m\in \X $.
	\end{corollary}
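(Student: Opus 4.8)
The plan is a one-line induction on the exponent $m$, resting entirely on the closure of $\X$ under multiplication established in Lemma \ref{Le:Prod}. For the base case I would take $m=1$ (or $m=0$, using that $I\in\X$), where the claim is immediate. For the inductive step, assuming $A^{m}\in\X$, I would write $A^{m+1}=A^{m}\cdot A$ and invoke Lemma \ref{Le:Prod} with the two factors $A^{m},A\in\X$ to conclude $A^{m+1}\in\X$.

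Alternatively, and perhaps more transparently, I would phrase it structurally: since $\X$ is a subring of $\Mat(n,\K)$ (the preceding Proposition), it is in particular closed under finite products of its elements, so every power of $A\in\X$ lies in $\X$. There is no real obstacle here; the only thing worth a moment's care is fixing the convention for $m=0$, i.e.\ noting that $A^{0}=I\in\X$ so the statement holds for all $m\in\mathbb{N}$ under either convention for $\mathbb{N}$.
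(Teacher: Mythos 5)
Your proof is correct and follows essentially the same route as the paper, which also deduces the claim from Lemma \ref{Le:Prod} together with the observation that $A^m$ is a repeated product of $A$ with itself; your induction merely makes that repetition formal. The remark about the convention for $m=0$ (using $I\in\X$) is a harmless and sensible addition.
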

	\begin{proof}
		Follows by Lemma \ref{Le:Prod} and by the fact that $ A^m$ is the product of $ A $ with itself $ m $ times. 
	\end{proof}
	We then have the following.
	\begin{proposition}
		For $A \in \mathcal{X}$, any matrix function
		\[
		f(A)=\sum_{i=0}^{\infty} a_i A^i
		\]
		defined via a matrix Taylor series which converges in $A$ belongs to $\mathcal{X}$.
	\end{proposition}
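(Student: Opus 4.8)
The plan is to reduce the statement to two facts already at our disposal: that $\X$ is closed under finite sums and under taking powers, and that $\X$ is a \emph{closed} linear subspace of $\Mat(n,\K)$. Throughout we tacitly assume $\K=\R$ or $\C$, so that convergence of a matrix series is meaningful; concretely, a sequence of matrices converges iff it converges entrywise, equivalently in any norm on the finite-dimensional space $\Mat(n,\K)\cong\K^{n^2}$.

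First I would record that, for every $N\in\mathbb{N}$, the partial sum $S_N=\sum_{i=0}^{N} a_i A^i$ belongs to $\X$. Indeed, each power $A^i$ lies in $\X$ by the preceding corollary; each scalar multiple $a_i A^i$ lies in $\X$ because $\X$ is cut out of $\Mat(n,\K)$ by the homogeneous linear conditions \eqref{X} and is therefore a $\K$-linear subspace (alternatively, $a_i A^i=(a_i I)\,A^i$ with $a_i I\in\X$, and one invokes Lemma \ref{Le:Prod}); and finite sums of elements of $\X$ lie in $\X$ again by Lemma \ref{Le:Prod}.

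Second, I would observe that, being the solution set of a finite family of homogeneous linear equations in the entries, $\X$ is a linear subspace of $\Mat(n,\K)$ and hence closed in the Euclidean topology of $\Mat(n,\K)\cong\K^{n^2}$. Since by hypothesis the sequence $(S_N)_{N\in\mathbb{N}}$ converges to $f(A)$ in $\Mat(n,\K)$, and every $S_N$ lies in the closed set $\X$, the limit $f(A)$ lies in $\X$ as well, which is the claim.

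There is essentially no genuine obstacle here; the only point demanding a word of care is to make precise what ``converges in $A$'' means and to note that a forbidden entry of the limit is the limit of the corresponding entries of the $S_N$, all of which vanish. In fact one can bypass the topological language entirely and argue coordinatewise: fix a pair $(i,j)$ with $i\neq j$ and $i+j\neq n+1$; the $(i,j)$ entry of every $S_N$ equals $0$ by the first step, so its limit, the $(i,j)$ entry of $f(A)$, is $0$, and thus $f(A)$ satisfies \eqref{X}.
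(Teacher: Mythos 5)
Your argument is correct and matches the paper's proof in essence: both show the partial sums lie in $\X$ using closure under powers, scalar multiples and sums, and then conclude by noting that $\X$ is a linear subspace of the finite-dimensional space $\Mat(n,\K)$, hence closed, so the limit stays in $\X$. Your coordinatewise remark is just a more explicit rendering of the same closedness step.
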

	\begin{proof}
		$f(A)$ is the limit of the evaluations at $A$ of polynomials with coefficients in $\mathbb{K}$; each of these evaluations is an element of $\mathcal{X}$ by the previous results. Therefore $f(A)$ belongs to $\mathcal{X}$, as any vector subspace of a finite dimensional vector space is closed.
	\end{proof}
	
	These functions include function such as matrix exponentials, logarithms, harmonic functions and the Cayley transformation $\psi(A)=(I-A)(I+A)^{-1}$.
	
	
	\section{The inverse of an X-matrix}
	This section is split into two parts. In the first, we characterize the invertible elements of $\X$, providing an explicit formula for the discriminant of an X-matrix. In the second, we determine a way to compute the inverse of an X-matrix conveniently.
	
	\subsection{Characterizing the Elements of an Inverse X-Matrix}
	Having proved that $\X$ is a subring of $\Mat(n,\K)$ one could wonder whether for any non-zero element of $\X$ it is possible to find a multiplicative inverse inside $\X$ itself. It is easy to find a non-zero X-matrix which is not invertible. For instance, one can consider the matrix
	\[
	\begin{bmatrix}
	1 & 0 & \dots & 0\\
	0 & 0 & & \\
	\vdots & & \ddots & \vdots \\
	0 & & \dots & 0
	\end{bmatrix}.
	\]
	Then, we want to understand what being invertible in $\X$ means, i.e. to characterize the set
	\[
	\X^\times = \set{ A \in \X | \ \text{there exists} \ B\in \X \ \text{ such that } \ AB=BA=I }.
	\]
	In this respect, observe that, since $\X$ is a subring of $\Mat(n,\K)$,
	if a matrix $A\in \X$ is invertible in $\X$, then it is also invertible in $\Mat(n,\K)$, or equivalently $\det A \neq 0$. This is equivalent to say that $\X^\times \subseteq \GL(n,\K)^\times \cap \X$.
	\medskip
	We are going to prove that the inverse of an X-matrix is an X-matrix. Because a square matrix is invertible if and only if $ \det A \neq 0 $, we start characterizing the determinant of an X-matrix. 
	\begin{lemma} \label{Le:Det}
		Given $ A\in \X $,
		\begin{equation}\label{eqdet}
		\det A=
		\begin{cases}
		\prod_{i=1}^{\frac{n}{2}} (a_{i,i}a_{n-i+1,n-i+1}-a_{i,n-i+1}a_{n-i+1,i}) & \text{if} \ $n$ \ \text{is even,} \\
		\prod_{i=1}^{\frac{n-1}{2}} (a_{i,i}a_{n-i+1,n-i+1}-a_{i,n-i+1}a_{n-i+1,i})\cdot a_{(n+1)/2,(n+1)/2} & \text{if} \ $n$ \ \text{is odd.}
		\end{cases}
		\end{equation}
	\end{lemma}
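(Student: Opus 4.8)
The plan is to reduce the statement to the determinant of a block-diagonal matrix by conjugating $A$ with a permutation matrix. By the definition of an X-matrix (condition \eqref{X}), the only entries of $A$ that can be non-zero lie in the positions $(i,i)$ and $(i,n-i+1)$. Consequently the index set $\{1,\dots,n\}$ breaks up into the pairs $\{i,n-i+1\}$ for $i=1,\dots,\lfloor n/2\rfloor$, together with the fixed point $(n+1)/2$ when $n$ is odd, and every non-zero entry of $A$ has both its indices inside a single block of this partition.

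First I would introduce the permutation $\sigma$ of $\{1,\dots,n\}$ that reorders the indices as $1,n,2,n-1,3,n-2,\dots$; explicitly $\sigma(2k-1)=k$ and $\sigma(2k)=n-k+1$ for $k=1,\dots,\lfloor n/2\rfloor$, with $\sigma(n)=(n+1)/2$ in the odd case. Let $A'$ be the matrix obtained from $A$ by applying $\sigma$ simultaneously to the rows and to the columns, so that $A'_{r,s}=a_{\sigma(r),\sigma(s)}$. Then $A'$ is conjugate to $A$ by a permutation matrix, hence $\det A'=\det A$, and it suffices to compute $\det A'$.

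Next I would check that $A'$ is block-diagonal. Row $r$ of $A'$ is row $\sigma(r)$ of $A$ with its columns permuted by $\sigma$; thus rows $2k-1$ and $2k$ of $A'$ come from rows $k$ and $n-k+1$ of $A$, which by the X-condition are supported on columns $k$ and $n-k+1$ of $A$, i.e.\ on columns $2k-1$ and $2k$ of $A'$. Hence the $k$-th diagonal block of $A'$ is
\[
\begin{bmatrix} a_{k,k} & a_{k,n-k+1} \\ a_{n-k+1,k} & a_{n-k+1,n-k+1} \end{bmatrix},
\]
every entry outside these blocks vanishes, and when $n$ is odd the final $1\times 1$ block is $\bigl(a_{(n+1)/2,(n+1)/2}\bigr)$. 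Since the determinant of a block-diagonal matrix is the product of the determinants of its blocks and $\det\begin{bmatrix} a & b \\ c & d\end{bmatrix}=ad-bc$, multiplying the block determinants yields precisely the right-hand side of \eqref{eqdet}.

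I do not anticipate a genuine obstacle: the argument is mostly bookkeeping, and the only points that need care are handling the parity of $n$ uniformly (the central $1\times 1$ block) and making sure one conjugates by the permutation matrix, rather than merely permuting rows, so that the block-diagonal form appears while the determinant stays invariant. As an alternative one could expand $\det A$ directly via the Leibniz formula: a permutation contributes a non-zero summand only if it fixes or transposes each pair $\{i,n-i+1\}$, so the sum splits as a product over these pairs, each factor being $a_{i,i}a_{n-i+1,n-i+1}-a_{i,n-i+1}a_{n-i+1,i}$, times the central entry when $n$ is odd.
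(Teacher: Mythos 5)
Your argument is correct, but it takes a genuinely different route from the paper's proof. The paper argues by induction on $n$: an X-matrix is an outer frame wrapped around an $(n-2)\times(n-2)$ X-matrix $A_{n-2}$, and two successive Laplace expansions give $\det A=(a_{1,1}a_{n,n}-a_{1,n}a_{n,1})\det A_{n-2}$, after which the product formula \eqref{eqdet} follows inductively. You instead apply a simultaneous row-and-column permutation (conjugation by a permutation matrix, so the determinant is unchanged) interleaving the indices as $1,n,2,n-1,\dots$, and check that the X-condition forces the permuted matrix to be block-diagonal with blocks $\left(\begin{smallmatrix}a_{i,i}&a_{i,n-i+1}\\ a_{n-i+1,i}&a_{n-i+1,n-i+1}\end{smallmatrix}\right)$, plus a central $1\times 1$ block when $n$ is odd; the formula is then the product of the block determinants. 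Your verification that the blocks close up is the essential point and is done correctly, and the Leibniz-formula variant you sketch is also sound, since a permutation contributing a non-zero term must fix or transpose each pair $\{i,n-i+1\}$. What your route buys is a one-shot, non-inductive argument and a stronger structural fact: $A$ is permutation-similar to a direct sum of $2\times 2$ (and possibly one $1\times 1$) blocks, which re-proves not only Lemma \ref{Le:Det} but also the characteristic-polynomial formula of Remark \ref{Char:X}, the invertibility statement of Proposition \ref{prop:inverse}, and the eigenvalue and eigenvector description in the appendix, all by reduction to the $2\times 2$ case. What the paper's induction buys is a more elementary, self-contained computation on the original matrix, using only Laplace expansion and no bookkeeping about the reordering. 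Both arguments use nothing about $\K$ being a field and remain valid over any commutative ring, which is exactly what Remark \ref{Char:X} later exploits.
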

	
	\begin{proof}
		The proof is by induction. The cases $n=1,2$ are evident. Therefore, assume that $n \ge 3$. In order to prove the claim we observe that each X-matrix of $\Mat(n,\K)$ is of the type 
		\[
		A=
		\begin{bmatrix}
		a_{1,1} & 0 & \dots & 0 & a_{1,n} \\
		0 & \quad & \quad & \quad & 0 \\
		\vdots & \quad & A_{n-2} & \quad & \vdots \\
		0 & \quad & \quad & \quad & 0 \\
		a_{n,1} & 0 & \dots & 0 & a_{n,n} \\
		\end{bmatrix}
		\]
		where $A_{n-2}$ is another X-matrix of $\Mat(n-2, \K)$. Expanding the determinant with the Laplace rule on the first column we obtain
		\[
		\det A = a_{1,1} \cdot
		\begin{vmatrix}
		& & & 0 \\
		& A_{n-2} & & \vdots \\
		& & & 0 \\
		0 & \dots & 0 & a_{n,n} \\
		\end{vmatrix}
		+ (-)^{n+1} a_{n,1} \cdot
		\begin{vmatrix}
		0 & \dots & 0 & a_{1,n} \\
		& & & 0 \\
		& A_{n-2} & & \vdots \\
		& & & 0 \\
		\end{vmatrix}.
		\]
		Using another time the Laplace rule we have
		\begin{align*}
		\det A = a_{1,1} \cdot a_{n,n} \cdot \det A_{n-2}+ (-)^{n+1}(-)^{n} a_{n,1} &\cdot a_{1,n} \cdot \det A_{n-2} = \\ &=(a_{1,1} a_{n,n} - a_{n,1} a_{1,n} )\cdot \det A_{n-2},
		\end{align*}
		and by induction we obtain the equality \eqref{eqdet}.
	\end{proof}
	\begin{remark}
	One can note that the determinant of a $3\times 3$ X-matrix is the difference between the product of the elements on the diagonal and the one of the elements on the anti-diagonal:
	\[
	\det
	\begin{bmatrix}
		a & 0 & d \\
		0 & b & 0 \\
		e & 0 & c \\
	\end{bmatrix}
	=
	abc-dbe.
	\] 
\end{remark}
	\begin{remark}[The characteristic polynomial of an X-matrix] \label{Char:X}
		So far we did not use the fact the ring from which X-matrices take elements is a field. Hence, the formula for the determinant of an X-matrix holds for X-matrices of $\Mat(n,R)$, where $R$ is a commutative ring. 
		Using this fact applied to the ring $R=\K\lbrack{}t\rbrack{}$, we can easily find the formula for the characteristic polynomial of an X-matrix. We recall that, given $A\in \Mat(n,\K)$, $\chi_A(t)=\det(tI-A)\in \K\lbrack{}t\rbrack{}$ is the characteristic polynomial. If $A\in\X$, then $tI-A$ is an X-matrix of $\Mat(n,\K\lbrack{}t\rbrack{})$. By Lemma \ref{Le:Det}, the characteristic polynomial of an X-matrix can be written as:
		\begin{equation*}
		\chi_A(t)=(t-a_{\frac{n+1}{2},\frac{n+1}{2}})\prod_{i=1}^{\frac{n-1}{2}} [(t-a_{i,i})(t-a_{n-i+1,n-i+1})-a_{i,n-i+1}a_{n-i+1,i}]
		\end{equation*}
		for $n$ odd, or
		\begin{equation*}
		\chi_A(t)=\prod_{i=1}^{\frac{n}{2}} [(t-a_{i,i})(t-a_{n-i+1,n-i+1})-a_{i,n-i+1}a_{n-i+1,i}]
		\end{equation*}
		for $n$ even.
		
	\end{remark}
	
	\begin{proposition}\label{prop:inverse}
		$\X^\times = \GL(n,\K) \cap \X$.
	\end{proposition}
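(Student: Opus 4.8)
The inclusion $\X^\times \subseteq \GL(n,\K)\cap\X$ has already been observed, so the work is entirely in the reverse direction: given $A\in\X$ with $\det A\neq 0$, I must produce $B\in\X$ with $AB=BA=I$. The plan is to exhibit $A^{-1}$ as a polynomial in $A$; since $\X$ is a unital subring it contains every scalar multiple of $I$ and is closed under powers and sums, so any polynomial in $A$ automatically lies in $\X$, and being a polynomial in $A$ it commutes with $A$, which handles both sides of the inverse at once.

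Concretely, I would first invoke Remark~\ref{Char:X}: the characteristic polynomial $\chi_A(t)=\det(tI-A)$ has constant term $\chi_A(0)=(-1)^n\det A$, which is nonzero by hypothesis. Writing $\chi_A(t)=t^n+c_{n-1}t^{n-1}+\dots+c_1 t+c_0$ with $c_0\neq 0$, the Cayley–Hamilton theorem gives $A^n+c_{n-1}A^{n-1}+\dots+c_1A+c_0I=0$. Factoring $A$ out of all but the last term yields
\[
c_0 I=-A\bigl(A^{n-1}+c_{n-1}A^{n-2}+\dots+c_1I\bigr),
\]
so that $B:=-c_0^{-1}\bigl(A^{n-1}+c_{n-1}A^{n-2}+\dots+c_1I\bigr)$ satisfies $AB=I$, and $BA=I$ as well since $B$ is a polynomial in $A$.

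It then remains to observe $B\in\X$: each power $A^i$ lies in $\X$ by the corollary to Lemma~\ref{Le:Prod}, each term $c_iA^i=(c_iI)A^i$ lies in $\X$ because $c_iI\in\X$ and $\X$ is closed under products, and $\X$ is closed under sums; hence $B\in\X$, so $A\in\X^\times$ and the two sets coincide. I do not expect a genuine obstacle here; the only point requiring a word of care is that $\X$ is closed under multiplication by scalar matrices, which is immediate from its being a unital subring. As an alternative, one can argue directly from Lemma~\ref{Le:Det}: $\det A\neq 0$ forces every $2\times 2$ submatrix obtained by restricting to rows and columns $i$ and $n-i+1$ (and the central entry when $n$ is odd) to be invertible, and reassembling the inverses of these blocks produces an X-matrix that is visibly a two-sided inverse of $A$; the polynomial argument is cleaner and sidesteps the reindexing bookkeeping.
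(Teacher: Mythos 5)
Your proof is correct, but it takes a genuinely different route from the paper. The paper argues constructively at the level of entries: using the factorization of $\det A$ from Lemma \ref{Le:Det}, it shows each $2\times 2$ submatrix $\left(\begin{smallmatrix}a_{i,i} & a_{i,n-i+1}\\ a_{n-i+1,i} & a_{n-i+1,n-i+1}\end{smallmatrix}\right)$ (and the central entry for $n$ odd) is invertible, assembles the block inverses into a matrix $B\in\X$ with $AB=I$, and then gets $BA=I$ by the standard determinant argument --- essentially the alternative you sketch in your last sentence. Your Cayley--Hamilton argument instead exhibits $A^{-1}$ as a polynomial in $A$, which lies in $\X$ because $\X$ is a unital subring containing the scalar matrices $c_iI$ (as $\X$ contains all diagonal matrices), and it gets $AB=BA=I$ for free. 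What each buys: your argument is more general and conceptually cleaner --- it proves that \emph{any} $\K$-subalgebra of $\Mat(n,\K)$ containing $I$ is closed under inversion of its invertible elements, and it makes transparent why the counterexample $\Mat(n,\Z)\subset\Mat(n,\Q)$ in the paper's preceding remark fails (lack of closure under scalar multiplication, so $c_0^{-1}$ ruins membership); the paper's argument is more explicit, yielding the inverse entrywise from the $2\times 2$ blocks, which dovetails with the determinant factorization and with the explicit inversion formulas developed in the following subsection. Your use of Remark \ref{Char:X} is inessential --- $\chi_A(0)=(-1)^n\det A$ holds for every square matrix --- but that does not affect correctness.
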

	
	\begin{remark}
		While $R^\times\subseteq \GL(n,\K) \cap R$ holds for any subring $R\subseteq \Mat(n,\K)$, we cannot say the same about the other inclusion. For example, if $\K=\Q$, let $R$ be $\Mat(n,\Z)$, which is a subring of $\Mat(n,\Q)$. 
		In this case the matrix $A=2I \in \GL(n,\Q)$ (since $\det A = 2^n \neq 0$) and $A \in R$ (because its elements lie in $\Z$). However, it is not true that $A \in R^\times$, since $A^{-1}$, the only matrix in $\Mat(n,\Q)$ such that $AA^{-1}=A^{-1}A=I$, is $A^{-1}=\frac{1}{2} I\notin R$, since not all its elements are integers.
		Thus, the equality between sets that we are going to prove is a property of the particular subring $\X$, that holds thanks to how it is constructed.
	\end{remark}
	\begin{proof}[Proof of Proposition \ref{prop:inverse}.]
		We have to prove that $\GL(n,\K{}) \cap \X \subseteq \X^\times$, or equivalently that, if $A\in \X$ and $\det A \neq 0$, then $A^{-1}\in \X$. We proceed as follows. Firstly, we recall that the inverse of a matrix $A$ is unique and that $A^{-1}A=AA^{-1}=I$. Thus, if we were able to find an element $B \in \mathcal{X}$ such that $AB=BA=I$, that element would be the unique inverse of $A$. Then, let us consider a generic $B \in \mathcal{X}$ and set $C=AB$. Note that the elements of a product matrix $C=AB$ with $A\in\X  $ and $ B\in \X $ are of the type:
		\begin{itemize}
			\item If $n$ is even,
			\begin{equation}\label{coeffprodeven}
			c_{i,j}=
			\begin{cases}
			a_{i,i}b_{i,i}+a_{i,n-i+1}b_{n-i+1,i} & \text{if} \ i=j \\
			a_{i,i}b_{i,n-i+1}+a_{i,n-i+1}b_{n-i+1,n-i+1} & \text{if} \ i+j=n+1 \\
			0 & \text{otherwise}
			\end{cases}\qquad.
			\end{equation}
			\item If $n$ is odd,
			\begin{equation}\label{coeffprododd}
			c_{i,j}=
			\begin{cases}
			a_{i,i}b_{i,i}+a_{i,n-i+1}b_{n-i+1,i} & \text{if} \ i=j\neq \frac{n+1}{2} \\
			a_{i,i}b_{i,n-i+1}+a_{i,n-i+1}b_{n-i+1,n-i+1} & \text{if} \ i+j=n+1, \ i, j \neq \frac{n+1}{2} \\
			a_{(n+1)/2,(n+1)/2} b_{(n+1)/2,(n+1)/2} & \text{if} \ i=j=\frac{n+1}{2} \\
			0 & \text{otherwise}
			\end{cases}\qquad.
			\end{equation}
		\end{itemize}
		Then, we can impose that all the elements on the diagonal are equal to $1$ and all the 
		elements on the anti-diagonal are equal to $0$:
		\begin{itemize}
			\item If $n$ is even,
			\begin{equation*}
			\begin{cases}
			a_{i,i}b_{i,i}+a_{i,n-i+1}b_{n-i+1,i}=1 & \text{for} \ i=1,2,\dots,n\\ 
			a_{i,i}b_{i,n-i+1}+a_{i,n-i+1}b_{n-i+1,n-i+1}=0 & \text{for} \ i=1,2,\dots,n\\
			\end{cases}\quad,
			\end{equation*}
			which is equivalent to
			\begin{equation*}
			\begin{bmatrix}
			a_{i,i} & a_{i,n-i+1} \\
			a_{n-i+1,i} & a_{n-i+1,n-i+1} \\
			\end{bmatrix}
			\begin{bmatrix}
			b_{i,i} & b_{i,n-i+1} \\
			b_{n-i+1,i} & b_{n-i+1,n-i+1} \\
			\end{bmatrix}
			=
			\begin{bmatrix}
			1 & 0 \\
			0 & 1 \\
			\end{bmatrix}
			\quad \text{for} \ i=1, \dots, \frac{n}{2}.
			\end{equation*}
			The idea of what we are doing is to focus on the $i$-th and $n-i+1$-th rows and columns 
			at the same time. We notice that the condition $a_{i,i}a_{n-i+1,n-i+1}-a_{i,n-i+1}a_{n-i+1,i}\neq 0$ 
			easily follows by the hypothesis $A \in \GL(n,\K)$, i.e.~$\det A \neq 0$, and by the equation
			\eqref{eqdet} for the determinant of an X-matrix. Thus, inverting the $2\times 2$ submatrices of $A$, we can find a suitable matrix $B \in \X$ such that $AB=I$.
			\item If $n$ is odd,
			\begin{equation*}
			\begin{cases}
			a_{i,i}b_{i,i}+a_{i,n-i+1}b_{n-i+1,i}=1 & \text{for} \ i=1,2,\dots,n, \ i \neq \frac{n+1}{2}\\ 
			a_{i,i}b_{i,n-i+1}+a_{i,n-i+1}b_{n-i+1,n-i+1}=0 & \text{for} \ i=1,2,\dots,n, \ i \neq \frac{n+1}{2}\\
			a_{(n+1)/2,(n+1)/2}b_{(n+1)/2,(n+1)/2}=1
			\end{cases}\quad,
			\end{equation*}
			which is equivalent to
			\begin{equation*}
			\begin{bmatrix}
			a_{i,i} & a_{i,n-i+1} \\
			a_{n-i+1,i} & a_{n-i+1,n-i+1} \\
			\end{bmatrix}
			\begin{bmatrix}
			b_{i,i} & b_{i,n-i+1} \\
			b_{n-i+1,i} & b_{n-i+1,n-i+1} \\
			\end{bmatrix}
			=
			\begin{bmatrix}
			1 & 0 \\
			0 & 1 \\
			\end{bmatrix}
			\quad \text{for} \ i=1, \dots, \frac{n-1}{2},
			\end{equation*}
			together with the equation
			\[
			a_{(n+1)/2,(n+1)/2}b_{(n+1)/2,(n+1)/2}=1.
			\]
			Now the argument is exactly the same as above, once noticed that also the condition $a_{(n+1)/2,(n+1)/2} \neq 0$ holds.
		\end{itemize}
		In both cases, we found $B \in \X$ such that $AB=I$. A well known argument shows that $BA=I$, 
		too: since $AB=I$, we have that $\det A \det B = 1$, so that $\det B \neq 0$ and $B^{-1}$ exists.
		Thus,
		\[
		BA=BABB^{-1}=B (AB) B^{-1} = BIB^{-1}=BB^{-1}=I.
		\]
		In particular, $B \in \X$ is the inverse of $A$ and the claim is proven.
	\end{proof}
	
	\subsection{Finding the Inverse of an X-matrix}
	We propose another approach to find the inverse of an X-matrix, taking an empirical approach. As we observed in Remark \ref{doublediag}, an X-matrix is the the sum of a diagonal and an anti-diagonal term; notationwise, in this section, we write $A=DI+EJ$ where $D$ and $E$ are diagonal matrices, $I$ is the identity and $J$ is the anti-identity, i.e.~the matrix made by ones on the anti-diagonal, and zeros everywhere else. Observe that the choice of $D, E$ is unique whenever $n$ is even, while it is not if $n$ is odd (the diagonal and the anti-diagonal intersect in the \emph{center} of the matrix). In the second case, we impose $e_{(n+1)/2,(n+1)/2}=0$, so that $d_{(n+1)/2,(n+1)/2}=a_{(n+1)/2,(n+1)/2}$. Assume that the matrix $D$ is invertible, i.e.~$a_{i,i}=d_{i,i}\neq 0$ for all $i=1, \dots, n$. Then $D^{-1}A=I+FJ$, where $F=\diag(\frac{e_{i,i}}{d_{i,i}})$ is the diagonal of quotients.
	\medskip
	
	\noindent We are about to use the expression of the geometric series in an \emph{heuristic} way, without caring (at least for now) about the convergence of the series that follows. Recall that, by Corollary \ref{cor:sym}, for $F$ diagonal, we have that $JF=F^{\AT}J$. Therefore
	\begin{multline*}
	A^{-1}D=I-FJ+(FJ)^2-(FJ)^3+(FJ)^4-(FJ)^5\pm\dots =\\
	=I-FJ+FF^{\AT}J^2-FF^{\AT}FJ^3 + FF^{\AT}F F^{\AT} J^4 - FF^{\AT}F F^{\AT} F J^5\pm \dots =\\
	=I-FJ+FF^{\AT} J^2-(F F^{\AT}) F J^3 + (FF^{\AT})^2 J^4 - (FF^{\AT})^2 F J^5\pm \dots =\\
	=(I+FF^{\AT}+ (FF^{\AT})^2+ \dots)I -(I+ FF^{\AT} +(FF^{\AT})^2+\dots)FJ  =\\
	= (I-FF^{\AT})^{-1} I - ( I-FF^{\AT})^{-1}FJ =  (I-FF^{\AT})^{-1} (I- FJ).
	\end{multline*}
	Hence
	\[
	A^{-1}=(I-FF^{\AT})^{-1} (I- FJ) D^{-1}.
	\]
	For this formula to hold, we need $a_{i,i}=d_{i,i}\neq 0$ for all $i=1,\dots,n$ and $e_{i,i} e_{n-i,n-i}\neq d_{i,i} d_{n-i,n-i}$. The obtained formula seems to hold only if the geometric series
	\[
	(I+FJ)^{-1}=I-FJ+(FJ)^2-(FJ)^3+(FJ)^4-(FJ)^5\pm\dots
	\]
	converges. Anyway, we can verify \emph{by hands} that the formula holds even if the matrix $FJ$ has elements big enough not to let the series converge:
	\begin{multline*}
	(I-FF^{\AT})^{-1} (I- FJ) D^{-1} A = (I-FF^{\AT})^{-1} (I- FJ) D^{-1} (DI+EJ)= \\
	= (I-FF^{\AT})^{-1} (I- FJ) (I+FJ)=(I-FF^{\AT})^{-1} (I- (FJ)^2) =\\
	=(I-FF^{\AT})^{-1} (I-FF^{\AT})=I,
	\end{multline*}
	and necessarily $A (I-FF^{\AT})^{-1} (I- FJ) D^{-1}=I$, too.
	\medskip
	
	\noindent Alternatively, assume $E$ invertible (for, is $n$ is odd impose $d_{(n+1)/2,(n+1)/2}=0$, so that $e_{(n+1)/2,(n+1)/2}=a_{(n+1)/2,(n+1)/2}$). Using $G=\diag(\frac{d_{i,i}}{e_{i,i}})$, we find $E^{-1}A=J+GI=(I+GJ)J$ and
	\begin{multline*}
	J A^{-1}E=I-GJ+GG^{\AT}J^2-GG^{\AT}GJ^3 + GG^{\AT}G G^{\AT} J^4 - GG^{\AT}G G^{\AT} G J^5\pm \dots \\
	= (I-GG^{\AT})^{-1} (I- GJ)
	\end{multline*}
	so that
	\[
	A^{-1}= J (I-GG^{\AT})^{-1} (I- GJ) E.
	\]
	For this formula to hold, we need $a_{i,i}=e_{i,i}\neq 0$ and $e_{i,i} e_{n-i,n-i}\neq d_{i,i} d_{n-i,n-i}$.
	\medskip
	
	We remark that these formulas are useful since, in both cases, the only matrices that we are inverting are diagonal matrices, which is a very easy task to accomplish. As already noticed, if $n$ is odd then the central entry is not uniquely decomposable. One can avoid to impose one of $e_{(n+1)/2,(n+1)/2}$ and $d_{(n+1)/2,(n+1)/2}$ equal to 0, as long as the required conditions are satisfied.
	
	\section{The bi-symmetry condition}
	We have already observed that $\X$ is not a commutative ring whenever $n\ge2$. However, it would be interesting to determine at least a subring of $\X$ which is commutative. There is a canonical way to do so: given a (possibly non-commutative) ring $\mathcal{A}$, one can consider the \emph{center} of $\mathcal{A}$, defined by
	\[
	\mathcal{Z}(\mathcal{A})=\set{a \in \mathcal{A} | ab=ba, \ \text{for all} \ b \in \mathcal{A}}.
	\]
	One can easily prove that $\mathcal{Z}(\mathcal{A})$ is a subring of $\mathcal{A}$ which, by definition, is commutative. However, in our particular case, this approach produces something already known: Indeed, it can be shown that the center of $\X$ is given by the diagonal matrices which are invariant under anti-transposition. For example, in the cases $n=3$ and $n=4$, $\mathcal{Z}(\X)$ consists of the matrices of the shape
	\[
	\begin{bmatrix}
	a & 0 & 0 \\
	0 & b & 0 \\
	0 & 0 & a \\
	\end{bmatrix},
	\quad
	\begin{bmatrix}
	a &  & \dots & 0 \\
	& b &  & \vdots \\
	\vdots &  & b &  \\
	0 & \cdots &  & a \\
	\end{bmatrix}.
	\]
	\noindent As we have already said, we are looking for something wider than a subset of diagonal matrices. Then the question is whether it is possible to impose a particular symmetry condition on the elements of $\X$ to obtain the result we look for.
	\begin{definition}
		We denote by $\X^b$ the subset of $\X$ whose elements are the X-matrices invariant under both transposition and anti-transposition, i.e.~
		\[
		\X^b=\set{ A \in \X | A=A^T=A^{\AT} }.
		\]
		We call the elements of $ \X^b $ \emph{bi-symmetric X-matrices}.
	\end{definition}
	\begin{example}
		For $n=3$ and $n=4$ we have that bi-symmetric X-matrices are have shapes
		\[
		\begin{bmatrix}
		a & 0 & c \\
		0 & b & 0 \\
		c & 0 & a \\
		\end{bmatrix},
		\quad
		\begin{bmatrix}
		a & 0 & 0 & c \\
		0 & b & d & 0 \\
		0 & d & b & 0 \\
		c & 0 & 0 & a \\
		\end{bmatrix}\text{, respectively.}
		\]
	\end{example}
	\noindent In particular, given $A \in \X^b$ invariance under anti-transposition imposes
	\begin{equation}\label{antitran}
	a_{i,i}=a_{n-i+1,n-i+1} \quad \text{for all} \ i=1,\dots,n,
	\end{equation}
	while invariance under transposition imposes
	\begin{equation}\label{tran}
	a_{i,n-i+1}=a_{n-i+1,i} \quad \text{for all} \ i=1,\dots,n.
	\end{equation}
	\begin{proposition}\label{Xbsr}
		$\X^b$ is a commutative subring of $\X$.
	\end{proposition}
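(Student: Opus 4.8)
The plan is to separate the routine closure properties from the one substantive point, commutativity, and to obtain everything from the decomposition of Remark~\ref{doublediag} together with the product formula already established in Lemma~\ref{Le:Prod}.

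First, closure of $\X^b$ under sums and negatives and the memberships $0,I\in\X^b$ are immediate, since $T$ and $\AT$ are additive, fix $0$ and $I$, and $\X$ is already known to be closed under these operations. The key observation is that closure of $\X^b$ under products is \emph{not} formal: for $A,B\in\X^b$ one has $(AB)^T=B^TA^T=BA$ and, by Corollary~\ref{prop:antitransposeprod}, $(AB)^{\AT}=B^{\AT}A^{\AT}=BA$ as well, so $AB\in\X^b$ holds \emph{exactly} when $AB=BA$. Hence the whole proposition reduces to showing that $\X^b$ is commutative.

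To prove commutativity I would proceed in two steps. Step one: translate bi-symmetry into a condition on the diagonal blocks. Writing $A=A_1I+A_2J$ and $B=B_1I+B_2J$ with $A_i,B_i$ diagonal (in the odd case adopting the convention that the central anti-diagonal entry is $0$), a short computation with Corollary~\ref{cor:sym} gives $A^T=A_1I+A_2^{\AT}J$ and $A^{\AT}=A_1^{\AT}I+A_2J$; therefore $A\in\X^b$ if and only if $A_1=A_1^{\AT}$ and $A_2=A_2^{\AT}$, and likewise for $B$. Step two: substitute into the product formula from the proof of Lemma~\ref{Le:Prod},
\[
AB=(A_1 B_1 + A_2 B_2^{\AT})\,I + (A_2 B_1^{\AT} + A_1 B_2)\,J,
\]
and compare with the same formula for $BA$. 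Using $A_i^{\AT}=A_i$, $B_i^{\AT}=B_i$, and the fact that diagonal matrices commute, one checks that the $I$-coefficient of $AB$ equals $A_1B_1+A_2B_2$ and the $J$-coefficient equals $A_1B_2+A_2B_1$, both visibly symmetric in $A$ and $B$; hence $AB=BA$, and with it $AB\in\X^b$.

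I do not expect a genuinely hard step. The only care needed is clerical: carrying out the index computation for how $T$ and $\AT$ act on the $A_1I+A_2J$ decomposition, and verifying that the odd-$n$ centre causes no problem (it is fixed by $\AT$ and, under the chosen convention, contributes only to $A_1$, so every identity above remains valid entry by entry). The single conceptual point worth stressing is the one above: unlike the closure of $\X$ under multiplication, the closure of $\X^b$ is not automatic and genuinely rests on commutativity, which in turn is nothing more than the commutativity of diagonal matrices once the anti-transpose has been absorbed into the two diagonal blocks.
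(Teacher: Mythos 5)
Your proof is correct, but it is organized differently from the paper's. The paper works entry by entry: using the product formulas \eqref{coeffprodeven} and \eqref{coeffprododd} it first verifies directly that $C=AB$ satisfies $C=C^{\AT}$, then deduces commutativity from $AB=(AB)^{\AT}=B^{\AT}A^{\AT}=BA$, and finally gets $C=C^T$ from commutativity. You reverse the logical order: you first observe that for $A,B\in\X^b$ one has $(AB)^T=BA=(AB)^{\AT}$, so product-closure is \emph{equivalent} to commutativity, and you then prove $AB=BA$ structurally, via the decomposition $A=A_1I+A_2J$ of Remark \ref{doublediag} and the formula $AB=(A_1B_1+A_2B_2^{\AT})I+(A_1B_2+A_2B_1^{\AT})J$ from the proof of Lemma \ref{Le:Prod}, after recasting bi-symmetry as $A_1=A_1^{\AT}$, $A_2=A_2^{\AT}$ (which is exactly conditions \eqref{antitran} and \eqref{tran} read on the two diagonal blocks). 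Your route makes the reason for commutativity transparent --- once the anti-transposes are absorbed, everything reduces to the commutativity of diagonal matrices --- and isolates the one non-formal point cleanly; the paper's entry-wise route avoids the uniqueness issue in the decomposition (your odd-$n$ convention that the central entry sits in $A_1$, needed for the ``if and only if'' in your Step one) at the cost of a separate explicit check of the central entry. Both arguments are complete; yours is a legitimate, slightly more structural alternative.
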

	\begin{proof}
		It is clear that the zero matrix belongs to $\X^b$, and that $\X^b$ is closed with respect to the sum and the additive inverse operations. Thus, $\X^b$ is an additive subgroup of $\X$. Moreover, note that the identity matrix lies inside $\X^b$. In order to prove that $\X^b$ is a subring of $\X$, what is left to show is that $\X^b$ is closed under the product operation. Assume that we are given $A, B \in \X^b$. Set $C=AB$; we need to prove that also for $C$ conditions (\ref{antitran}) and (\ref{tran}) hold. Let us start from (\ref{antitran}). From equations (\ref{coeffprodeven}) and (\ref{coeffprododd}), for all off-center elements we have
		\[
		c_{i,i}=a_{i,i}b_{i,i}+a_{i,n-i+1}b_{n-i+1,i}=a_{n-i+1,n-i+1}b_{n-i+1,n-i+1}+a_{n-i+1,i}b_{i,n-i+1}.
		\]
		Note that $a_{n-i+1,n-(n-i+1)+1}=a_{n-i+1,i}$, as well as $b_{n-(n-i+1)+1,n-i+1}=b_{i,n-i+1}$. Hence,
		\[
		c_{i,i}=a_{n-i+1,n-i+1}b_{n-i+1,n-i+1}+a_{n-i+1,n-(n-i+1)+1}b_{n-(n-i+1)+1,n-i+1}=c_{n-i+1,n-i+1}.
		\]
		If $n$ is odd and $i=\frac{n+1}{2}$,
		\[
		c_{(n+1)/2,(n+1)/2}=c_{n-((n+1)/2)-1,n-((n+1)/2)-1},
		\]
		so that this peculiar case is verified, too. This proves that $C=C^{\AT}$. Now we pause this argument for a moment, and we use the results obtained so far to show that, if $A, B \in \X^b$, then $AB=BA$. Recalling the remarks made at the very beginning of this paper about the properties of anti-transposition, we deudce that
		\[
		AB=(AB)^{\AT}=B^{\AT} A^{\AT}=BA.
		\]
		Lastly, we can go back and use the commutative property to show that $C=C^T$:
		\[
		C^T= (AB)^T=B^T A^T=BA=AB=C,
		\]
		and this proves the claim.
	\end{proof}
	We can investigate the commutativity of the ring $\X^b$ further. Indeed, one can observe that bi-symmetric matrices admit a common basis of eigenvectors (in particular, $e_i \pm e_{n-i+1}$ for $i=1,\dots, n/2$ if $n$ is even, and $e_i \pm e_{n-i+1}$ for $i=1, \dots, (n+1)/2$, together with $e_{(n+1)/2}$, if $n$ is odd). Let $C$ be the $n\times n$ invertible matrix obtained by collecting these vectors. Then the ring homomorphism given by the conjugation
	\[
	\X^b \to \Mat(n,\K), \quad A \mapsto C^{-1}AC
	\]
	is injective and has image contained inside the diagonal matrices. Thus, $\X^b$ is commutative being isomorphic to a subring of the diagonal matrices, which constitute a commutative ring. This is a specific situation of a general result:
	\begin{proposition}
		Assume that $\mathcal{Y}$ is a subring of $\Mat(n,\K)$ such that
		\begin{itemize}
			\item For any $A \in \mathcal{Y}$, $A$ is nondefective;
			\item For any $A \in \mathcal{Y}$ and $\lambda \in \K$, $\lambda A \in \mathcal{Y}$ (so that $\mathcal{Y}$ is a $\K$-vector subspace of $\Mat(n,\K)$).
		\end{itemize}
		Then $\mathcal{Y}$ is commutative if and only if there is a common basis of eigenvectors for all the elements of $\mathcal{Y}$.
	\end{proposition}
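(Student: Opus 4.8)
The plan is to treat the two implications separately. Throughout I read ``nondefective'' as diagonalizable over $\K$ and ``common basis of eigenvectors'' as a basis of $\K^n$ each of whose members is an eigenvector of every element of $\mathcal{Y}$; should ``nondefective'' be meant over the algebraic closure, one simply runs the same argument over $\overline{\K}$.

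The implication ($\Leftarrow$) is immediate. If $v_1,\dots,v_n$ is a common eigenbasis and $C$ is the invertible matrix having the $v_i$ as columns, then $C^{-1}AC$ is diagonal for every $A\in\mathcal{Y}$. Since diagonal matrices commute, $C^{-1}(AB)C=(C^{-1}AC)(C^{-1}BC)=(C^{-1}BC)(C^{-1}AC)=C^{-1}(BA)C$, hence $AB=BA$; as $\mathcal{Y}$ is a ring, this says exactly that $\mathcal{Y}$ is commutative. (Nothing about $\mathcal{Y}$ is used here beyond its being a ring.)

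For ($\Rightarrow$) I would argue by induction on $n$, the case $n=1$ being trivial. If every $A\in\mathcal{Y}$ is a scalar multiple of $I$, any basis of $\K^n$ works. Otherwise fix a non-scalar $A\in\mathcal{Y}$. As $A$ is diagonalizable we have $\K^n=\bigoplus_\lambda E_\lambda$, the sum over the (at least two) distinct eigenvalues of $A$, with $E_\lambda=\ker(A-\lambda I)$ and $\dim E_\lambda<n$ for each $\lambda$. Two facts drive the induction: (i) every $B\in\mathcal{Y}$ maps each $E_\lambda$ into itself, since $AB=BA$ gives $A(Bv)=B(Av)=\lambda Bv$ for $v\in E_\lambda$; and (ii) the restriction of $B$ to $E_\lambda$ is again diagonalizable. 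Granting (i) and (ii), the restrictions $\{B|_{E_\lambda}:B\in\mathcal{Y}\}$ form a commuting family of diagonalizable operators on $E_\lambda$ --- indeed again a scalar-closed subring of the endomorphism ring of $E_\lambda$, though only the commuting-and-diagonalizable properties matter --- so by the inductive hypothesis $E_\lambda$ possesses a basis of common eigenvectors of all the $B|_{E_\lambda}$. Concatenating such bases over all eigenvalues $\lambda$ of $A$ yields a basis of $\K^n$ consisting of common eigenvectors of every $B\in\mathcal{Y}$, which is what we want.

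The one real obstacle is fact (ii): diagonalizability of $B$ must be shown to pass to its restriction to the invariant subspace $E_\lambda$. The clean way is via minimal polynomials --- a matrix over $\K$ is diagonalizable precisely when its minimal polynomial is a product of distinct monic linear factors --- together with the observation that the minimal polynomial of $B|_{E_\lambda}$ divides that of $B$ (any polynomial annihilating $B$ annihilates its restriction), hence also splits into distinct linear factors. Everything else --- the eigenspace decomposition, invariance under commuting maps, and the easy direction --- is routine. I would close with the remark that, strictly speaking, neither the subring structure nor the vector-space structure of $\mathcal{Y}$ is essential: the content of the statement is the classical fact that a set of matrices is simultaneously diagonalizable if and only if it is a commuting family of nondefective matrices; phrasing it for the subring $\mathcal{Y}$ merely makes the application to $\X^b$ transparent.
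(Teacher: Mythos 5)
Your proof is correct, and in the forward direction it takes a genuinely different route from the paper. The paper uses the $\K$-subspace hypothesis to pick a finite basis $y_1,\dots,y_r$ of $\mathcal{Y}$, invokes the classical fact that finitely many commuting diagonalizable matrices admit a common eigenbasis (justified only by the remark that if $AB=BA$ then $B$ preserves each eigenspace of $A$), and then extends the conclusion to an arbitrary element $\alpha_1 y_1+\dots+\alpha_r y_r$ by linearity, so that the scalar-closure assumption is genuinely used. You instead prove the simultaneous-diagonalization statement for the whole family from scratch, by induction on $n$: eigenspace decomposition of a non-scalar element, invariance of each eigenspace under every commuting element, diagonalizability of the restrictions via the minimal-polynomial criterion, and concatenation of the eigenbases produced by the inductive hypothesis. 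What each buys: the paper's argument is shorter modulo the cited textbook fact and makes transparent where the vector-space hypothesis enters; yours is self-contained (it actually proves the fact the paper cites), and it shows that for this implication neither the subring nor the scalar-closure hypothesis is needed --- commutativity plus nondefectiveness of each element suffices --- which you rightly point out. Your handling of the one delicate step (that $B|_{E_\lambda}$ is again diagonalizable, because its minimal polynomial divides the squarefree split minimal polynomial of $B$) is exactly the right fix, and your remark that the restricted family need only be a commuting family of diagonalizable operators, not literally a subring of some $\Mat(m,\K)$, disposes of the only formal mismatch with the stated inductive hypothesis. The easy direction coincides with the paper's conjugation argument.
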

	\begin{proof}
		Indeed, if there is a common basis of eigenvectors, the argument is the same as above: pick $C$ the invertible matrix obtained by collecting together these eigenvectors. The injective morphism
		\[
		\mathcal{Y} \to \Mat(n,\K), \quad A \mapsto C^{-1}AC
		\]
		maps $\mathcal{Y}$ into a subring of the diagonal matrices, which is commutative and isomorphic to $\mathcal{Y}$. On the other hand, assume $\mathcal{Y}$ is commutative. Choose a basis $\Set{y_1, \dots, y_r}$ of $\mathcal{Y}$. Since these elements are diagonalizable and commute with each other, we can find a common basis of eigenvectors $\Set{x_1, \dots, x_n}$. This is a general fact of linear algebra, and follows from the fact that if $AB=BA$ then $B$ sends an eigenvector of $A$ to an eigenvector of $A$ relative to the same eigenvalue. Hence, we have that
		\[
		y_i x_j = \lambda_{ij} x_j
		\]
		for some $\lambda_{ij} \in \K$. Now pick a generic element $\alpha_1 y_1 + \dots + \alpha_r y_r \in \mathcal{Y}$; then
		\[
		(\alpha_1 y_1 + \dots + \alpha_r y_r)x_j= (\alpha_1 \lambda_{1j} + \dots + \alpha_r \lambda_{rj})x_j,
		\]
		so that $\Set{x_1,\dots,x_n}$ is a common basis of eigenvectors for the whole $\mathcal{Y}$.
	\end{proof}
	We now show that if an X-matrix is bi-symmetric and invertible, then also its inverse is bi-symmetric and invertible.
	\begin{proposition}
		Assume that $A$ is a bi-symmetric invertible matrix. Then $A^{-1}$ is a bi-symmetric matrix.
	\end{proposition}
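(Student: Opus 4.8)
The plan is to deduce the two defining symmetry conditions of $\X^b$ one at a time, each as a one-line consequence of how transposition and anti-transposition interact with matrix inversion, after first recording that $A^{-1}$ stays inside $\X$.

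First I would invoke Proposition \ref{prop:inverse}: since $A \in \X$ is invertible we have $\det A \neq 0$, so $A \in \GL(n,\K) \cap \X = \X^\times$, and therefore $A^{-1} \in \X$. It thus remains only to check the equalities $A^{-1} = (A^{-1})^T$ and $A^{-1} = (A^{-1})^{\AT}$, after which membership in $\X^b$ is immediate; invertibility of $A^{-1}$ is automatic since $(A^{-1})^{-1} = A$.

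For the transposition condition I would use that $(\,\cdot\,)^T$ is an anti-automorphism of $\Mat(n,\K)$ together with uniqueness of the inverse: applying transposition to $A A^{-1} = I$ gives $(A^{-1})^T A^T = I$, and since $A = A^T$ this reads $(A^{-1})^T A = I$; a left inverse of an invertible matrix equals its inverse, so $(A^{-1})^T = A^{-1}$. For the anti-transposition condition the argument is formally the same, using Corollary \ref{prop:antitransposeprod} in place of the corresponding property of $(\,\cdot\,)^T$ and the trivial fact $I^{\AT} = I$: applying anti-transposition to $A A^{-1} = I$ gives $(A^{-1})^{\AT} A^{\AT} = I$, hence $(A^{-1})^{\AT} A = I$ because $A = A^{\AT}$, and again uniqueness of the inverse forces $(A^{-1})^{\AT} = A^{-1}$. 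Combining the three facts, $A^{-1} \in \X$ and $A^{-1} = (A^{-1})^T = (A^{-1})^{\AT}$, so $A^{-1} \in \X^b$.

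I do not expect a genuine obstacle here. The only subtlety worth flagging is that anti-transposition reverses the order of a product (Corollary \ref{prop:antitransposeprod}), so one must first read off $(A^{-1})^{\AT}$ as a \emph{left} inverse of $A$ and only then appeal to uniqueness of the two-sided inverse; this is harmless since for an invertible matrix any one-sided inverse coincides with the inverse. (Alternatively, one could bypass even this by noting $A^{\AT} = (JAJ)^T$ from the lemma relating the two operations, reducing the anti-transpose statement to the transpose statement applied to $JAJ$, but the direct argument above is shorter.)
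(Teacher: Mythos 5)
Your argument is correct and is essentially the paper's own proof: both first use Proposition \ref{prop:inverse} to place $A^{-1}$ in $\X$, then transpose and anti-transpose $AA^{-1}=I$ and invoke uniqueness of the inverse together with $A=A^T=A^{\AT}$ to conclude $(A^{-1})^T=(A^{-1})^{\AT}=A^{-1}$. Your extra remark about first reading $(A^{-1})^{\AT}$ as a left inverse is a harmless refinement of the same reasoning.
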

	\begin{proof}
		We already know by Proposition \ref{prop:inverse} that $A^{-1} \in \X$. Transposing and anti-transposing the equation $A A^{-1}=I$ we obtain
		\[
		(A^{-1})^T A^T=I^T=I, \quad (A^{-1})^{\AT} A^{\AT}=I^{\AT}=I.
		\]
		By uniqueness of the inverse of a matrix, we deduce that
		\[
		(A^{-1})^T=(A^T)^{-1}=A^{-1}, \quad (A^{-1})^{\AT}=(A^{\AT})^{-1}=A^{-1}.
		\]
		Thus, $A^{-1} \in \X^b$.
	\end{proof}
	\noindent We can summarize this results in the following statement:
	\[
	(\X^b)^\times = \GL(n,\K) \cap \X^b.
	\]
	
	
	\section{X-Companion Matrices and Eigenvalue Inclusion}
	In linear algebra, companion matrices and eigenvalue inclusion play a central role and are subject to intensive investigation (see \cite{Key2004103} and more recently \cite{Deaett2019223,Chan2017335} on companion matrices, and works such as \cite{Brual82,Hadji14,Bu2015} on eigenvalue inclusion).
	In this section,  we discuss whether X-matrices can be companion on any monic polynomial on $ \mathbb{K} $ in the first part. We present results for eigenvalue inclusion in the second part.
	
	\subsection{Companion Matrices in X-form}
	We look for a non-trivial answer to  the question of whether it might be true that, given a monic polynomial, there always exists an X-matrix whose characteristic polynomial is $f$. 
	We recall that a monic polynomial $f(t) \in \K[t]$ is an expression of the type $f(t)=t^n+a_{n-1}t^{n-1}+\dots+a_0$. We call the \emph{companion matrix} of $f$ the $n\times n$ matrix $ C $ given by
	\[
	C=\begin{bmatrix}
	0 & 0 & \dots & 0 & -a_0 \\
	1 & 0 & \dots & 0 & -a_1 \\
	0 & 1 & \dots & 0 & -a_2 \\
	\vdots & \vdots & \ddots & \vdots & \vdots \\
	0 & 0 & \dots & 1 & -a_{n-1} \\
	\end{bmatrix}.
	\]
	The characteristic polynomial of $ C $ coincides with $f$. A well known result is that whenever $\mathbb{K}$ is algebraically closed (e.g.~$\mathbb{K}=\C$), each polynomial factors into linear terms as
	\[
	f(t)=\prod_{i=1}^n (t-\alpha_i),
	\]
	so that there exists a diagonal matrix that satisfies the property. Because diagonal matrices are a subset of $X $-matrices, we have immediately a trivial answer to this section starting question. On the other hand, this is not true for other fields.
	
	For instance, $f(t)=t^3-2$ is irreducible over $\Q$ (since it is of degree 3 and without roots in $\Q$). We have already remarked that, whenever $n$ is odd, any $n\times n$ matrix has at least an eigenvalue, i.e.~the characteristic polynomial has at least one root. Therefore, there cannot exist a $3\times 3$ X-matrix for such an $f$ on $ \Q$.
	
	In the remainder, we deal with the very familiar case $\K=\R$. In this case, in spite of what happens in $\C$, the field is not algebraically closed, hence diagonal matrices are not enough to cover all the monic polynomials of $\R[t]$ by taking their characteristic. However, all the monic polynomials of $\R[t]$ can be covered by X-matrices.
	\begin{theorem}
		Given any $f(t)\in \R[t]$ a monic polynomial of degree $n$ with coefficients in $\R$, there exists a $n\times n$ X-matrix $A\in\Mat(n,\R)$ such that $\chi_A(t)=f(t)$.
	\end{theorem}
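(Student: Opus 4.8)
The plan is to read off, from Remark~\ref{Char:X}, the precise shape that characteristic polynomials of X-matrices can take: up to a single linear factor $t-a_{(n+1)/2,(n+1)/2}$ present when $n$ is odd, $\chi_A(t)$ is a product of monic quadratics, one for each nested index pair $\{i,n-i+1\}$, the $i$-th quadratic being the characteristic polynomial of the $2\times 2$ submatrix $\begin{bmatrix} a_{i,i} & a_{i,n-i+1} \\ a_{n-i+1,i} & a_{n-i+1,n-i+1}\end{bmatrix}$. Consequently it suffices to write $f$ as a product of monic quadratics (plus one monic linear factor when $n$ is odd), realize each factor as the characteristic polynomial of a $2\times2$ (resp.\ $1\times1$) real matrix, and then assemble these small blocks into an X-matrix along the diagonal/anti-diagonal positions, filling every remaining entry with $0$.

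Concretely, I would proceed in three steps. First, invoke the real factorization theorem: every monic $f\in\R[t]$ is a product of monic linear factors and monic irreducible quadratic factors. Counting degrees, when $n$ is even the number of linear factors (with multiplicity) is even, so pairing them up produces a factorization $f=q_1\cdots q_{n/2}$ into monic quadratics; when $n$ is odd, exactly one linear factor survives the pairing, giving $f=(t-\alpha)\,q_1\cdots q_{(n-1)/2}$. Second, observe the elementary fact that a monic quadratic $q(t)=t^2+pt+r$ is the characteristic polynomial of a $2\times2$ real matrix, for instance of its companion matrix $\begin{bmatrix} 0 & -r \\ 1 & -p\end{bmatrix}$, since $\det(tI_2-M)=t^2-(\mathrm{tr}\,M)\,t+\det M$. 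Third, assemble: for $i=1,\dots,\lfloor n/2\rfloor$ place the $2\times2$ block realizing $q_i$ into the four positions $(i,i),(i,n-i+1),(n-i+1,i),(n-i+1,n-i+1)$, put $\alpha$ in the central entry when $n$ is odd, and set all other entries to $0$. By construction $A$ is an X-matrix, and by Remark~\ref{Char:X} its characteristic polynomial equals $\prod q_i$ (times $t-\alpha$ when $n$ is odd), that is $f$.

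I do not expect a serious obstacle: the only genuine input beyond bookkeeping is the real factorization theorem, and the remaining work is just organizing the quadratic blocks along the nested anti-diagonal structure already isolated in Lemma~\ref{Le:Det}. The one point worth a sentence of care is the parity argument in the odd case — checking that exactly one linear factor remains after pairing, so that it can be slotted into the unique central entry while the other factors fill the symmetric $2\times2$ blocks — and, if one wants the companion-matrix realization of each quadratic to match the formula in Remark~\ref{Char:X} literally, verifying that the block $\begin{bmatrix} 0 & -r \\ 1 & -p\end{bmatrix}$ placed on rows and columns $\{i,n-i+1\}$ contributes the factor $(t-0)(t-(-p))-(-r)\cdot 1=t^2+pt+r$.
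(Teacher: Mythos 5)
Your proposal is correct and follows essentially the same route as the paper: factor $f$ over $\R$ into linear and irreducible quadratic factors, realize each quadratic as a $2\times 2$ companion block placed on the rows and columns $\{i,n-i+1\}$, put the leftover linear factor (odd $n$) in the central entry, and conclude via the block factorization of $\chi_A$ from Remark~\ref{Char:X}. The only cosmetic difference is that you pair up the real linear factors into quadratics and use companion blocks for them as well, whereas the paper keeps those roots as diagonal entries of the inner $2\times 2$ blocks; both variants work, and your explicit check that the block $\begin{bmatrix} 0 & -r \\ 1 & -p\end{bmatrix}$ contributes $t^2+pt+r$ covers the one point needing verification.
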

	\begin{proof}
		Let $f(t)$ be a monic polynomial of degree $n$, such that $f\in \R [t]$. It is known that every polynomial with coefficients in $\R$ can be factorized into polynomials whose degree is at most $2$. Hence we can write
		\[
		f(t)=\prod_{i=1}^{k}(t-\alpha_i) \cdot \prod_{i=k+1}^l (t^2+\beta_i t +\gamma_i) \qquad \alpha_i,\beta_i,\gamma_i\in\R.
		\]
		That is, $f$ is the product of $l$ polynomials, of which $k$ have degree $1$ and the remaining $l-k$ have degree $2$, with $k+2(l-k)=n$ (if $n$ is odd, $k>0$ and $ k $ is odd; conversely, if $n$ is even, $k$ is even, and it can also be equal to $0$).
		
		Recall that our aim is to find a $n\times n$ X-matrix whose characteristic polynomial is $f$.
		A generic X-matrix is $A=(a_{i,j})$ where $a_{i,j}=0$ whenever $i\neq j$ and $i+j \neq n+1$. We also recall (Remark \ref{Char:X}) that the characteristic polynomial of an X-matrix is the product of the characteristic polynomials of the matrices 
		\[ \begin{bmatrix}
		a_{i,i} & a_{i,n-i+1}\\
		a_{n-i+1,i} & a_{n-i+1,n-i+1}\\
		\end{bmatrix} , \] multiplied by $(t-a_{n+1/2,n+1/2})$ if $n$ is odd. The next task is, then, to construct the elements $a_{i,j}$ to make $f(t)$ equal to the characteristic polynomial of $A$.
		
		Suppose $n$ is even: indeed, if $n$ is odd, we can set $a_{n+1/2,n+1/2}=\alpha_{k+1/2}$ (it exists because we remarked that if $n$ is odd, $k>0$ and it's an odd number), and go on considering the $(n-1)\times (n-1)$ matrix obtained ignoring the $(n+1)/2$-th row and column. In this case, $k$ is even as well. We start building our X-matrix from its central element; place the $k\times k$ diagonal matrix $\Delta_k=\diag(\alpha_1,\dots,\alpha_n)$ in the middle of our $n\times n$ matrix. This means that for all $j=1,\dots,k/2$ we set
		\[
		\begin{bmatrix}
		a_{l+j,l+j} & a_{l+j,n-l-j+1} \\
		a_{n-l-j+1,l+j} & a_{n-l-j+1,n-l-j+1} \\
		\end{bmatrix}
		=
		\begin{bmatrix}
		\alpha_j & 0 \\
		0 & \alpha_{k+1-j}\\
		\end{bmatrix}.
		\]
		The characteristic polynomial of this matrix is
		$(t-\alpha_{j})\cdot (t-\alpha_{k+1-j})$. The only blocks remaining are the external ones. Then, for all $j=k+1,\dots,l$, we can consider the factor $t^2+\beta_j t+\gamma_j$ and its $2\times 2$ companion matrix, that is
		\[
		C_j=
		\begin{bmatrix}
		0 & -\beta_j \\
		1 &  -\gamma_j \\
		\end{bmatrix},
		\]
		whose characteristic polynomial is $t^2+\beta_j t+ \gamma_j$, and we can set
		\[
		\begin{bmatrix}
		a_{j-k,j-k} & a_{j-k,n-j+k+1}\\
		a_{n-j+k+1,j-k} & a_{n-j+k+1,n-j+k+1}\\
		\end{bmatrix} = C_j.
		\]
		In conclusion, we obtain a matrix of this shape:
		\[C=
		\begin{bmatrix}
		0 &\dots & & & & & & & \dots & -\beta_{k+1} \\
		\vdots& \ddots &&&&&&& \udots & \vdots \\
		&& 0 & \dots &&&\dots & -\beta_{l} && \\
		&&\vdots & \alpha_1 &&& 0 &\vdots && \\
		&&&& \ddots & \udots &&&& \\
		&&&& \udots & \ddots &&&& \\
		&&\vdots & 0 &&& \alpha_k &\vdots && \\
		&& 1 &\dots &&&\dots & -\gamma_{l} && \\
		\vdots & \udots &&&&&&& \ddots &\vdots \\
		1 &\dots &&&&&&&\dots & -\gamma_{k+1} \\
		\end{bmatrix}.
		\]
		This is the matrix we were looking for, because $ C $ is an $ n \times n$ matrix such that
		\[
		\det(tI-A)=\prod_{i=1}^r (t-\alpha_i) \cdot \prod_{i=k+1}^l \det(tI-C_i)= \prod_{i=1}^r (t-\alpha_i) \cdot \prod_{i=k+1}^l (t^2+\beta_i t+ \gamma_i)=f(t).
		\]
		Note that $A$ is a $n\times n$ X-matrix whose characteristic polynomial is $f$.
	\end{proof}
	
	\begin{remark}
		Alternatively, one may provide a companion matrix of the form
		\[C=
		\begin{bmatrix}
		0 &\dots & & & & & & & \dots & -\beta_{k+1} \\
		\vdots& \ddots &&&&&&& \udots & \vdots \\
		&& 0 & \dots &&&\dots & -\beta_{l} && \\
		&&\vdots & 0 &&& -\alpha_1 \alpha_k &\vdots && \\
		&&&& \ddots & \udots &&&& \\
		&&&& \udots & \ddots &&&& \\
		&&\vdots & 1 &&& \alpha_1 + \alpha_k &\vdots && \\
		&& 1 &\dots &&&\dots & -\gamma_{l} && \\
		\vdots & \udots &&&&&&& \ddots &\vdots \\
		1 &\dots &&&&&&&\dots & -\gamma_{k+1} \\
		\end{bmatrix}.
		\]
		which is composed of two constant blocks on the left: a zero block and an anti-identity block.
	\end{remark}
	
	\noindent The above result shows that for every monic polynomial in $ \mathbb{R} $ there exists an X-companion matrix. Still considering the case $\K=\R$, we note that a bi-symmetric X-matrix is in particular symmetric, and therefore diagonalizable. As a consequence, its characteristic polynomial factors into linear terms, so that we cannot cover the whole set of monic polynomials over $\R$ using bi-symmetric X-matrices only.

\begin{figure}[b!]
	\includegraphics[width=1\textwidth]{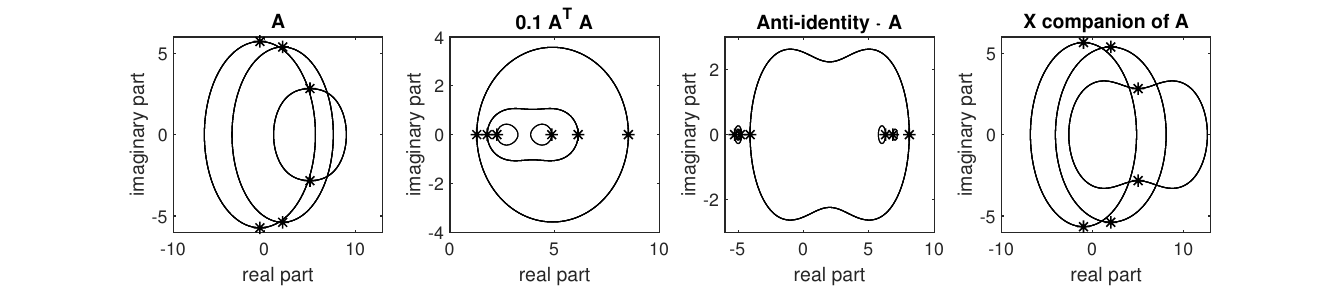}
	\caption{Eigenvalue Inclusion via Cassini ovals for the X matrix $A$, its symmetric product $A^T A$, multiplied by the anti-diagonal, and in X companion form.}
	\label{fig:cassini}
\end{figure}

\subsection{Eigenvalue Inclusion for X-Matrices}
Gerschgorin's Disk Theorem and the expansions to Cassini's ovals \cite{Varg04} 
allow for locating eigenvalues 
within geometrical regions (the analytical expressions of eigenvalues and eigenvectors of X-matrices are provided in the Appendix). 
For X-matrices given by $A=(a_{ij})=\diag(d_i)+\diag(e_i)J$  the disks are centered at the diagonal entries $d_i$ and have radius $|e_i|$ 
when considering row sums, 
(or $|e_{n-i+1}|$ when considering column sums which is not persued here further). 
The union of this disks includes the spectrum. Cassini's ovals are given by $C(z_1,z_2,d)=\{ s \in\mathbb{C}: |s-z_1|\,|s-z_2| \leq d \}$. 
From Brualdi's results, only elements connected by a cycle need to be considered for covering the spectrum. 
For X-matrices, rows $i$ and $n-i+1$ form nontrivial cycles.
For odd $n$, the matrix center element $a_{{\frac{n+1}{2}},\frac{n+1}{2}}$ is already an eigenvalue.
 Hence, the spectrum of the X-matrix $A$ satisfies
$\sigma(A)\subset C(d_i,d_{n-i+1}, |e_i e_{n-i+1} |)$. Therefore $n$ Gerschgorin disks are replaced by 
$\frac{n}{2}$ ovals. For $n$ odd, we have $\frac{n-1}{2}$ ovals together with the point of the matrix center. 

\begin{theorem}
Eigenvalues of real X matrices are located on the boundary of the Cassini ovals.
\end{theorem}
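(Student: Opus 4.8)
The plan is to read everything off the explicit factorization of the characteristic polynomial established in Remark~\ref{Char:X}. Write the real X-matrix as $A=\diag(d_i)+\diag(e_i)J$, so that $d_i=a_{i,i}$, $e_i=a_{i,n-i+1}$ and $e_{n-i+1}=a_{n-i+1,i}$. By Remark~\ref{Char:X},
\[
\chi_A(t)=\prod_{i=1}^{\lfloor n/2\rfloor} q_i(t),\qquad q_i(t):=(t-d_i)(t-d_{n-i+1})-e_i e_{n-i+1},
\]
with, in addition, the linear factor $t-d_{(n+1)/2}$ when $n$ is odd. Consequently the spectrum $\sigma(A)$ is exactly the union of the root sets of the quadratics $q_1,\dots,q_{\lfloor n/2\rfloor}$, together with the single point $d_{(n+1)/2}$ if $n$ is odd. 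So it is enough to locate the roots of each $q_i$.

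Fix $i$ and set $z_1=d_i$, $z_2=d_{n-i+1}$, $d=\abs{e_i e_{n-i+1}}$. If $\lambda\in\C$ satisfies $q_i(\lambda)=0$, then $(\lambda-z_1)(\lambda-z_2)=e_i e_{n-i+1}$, and taking absolute values gives
\[
\abs{\lambda-z_1}\,\abs{\lambda-z_2}=\abs{e_i e_{n-i+1}}=d.
\]
Since $C(z_1,z_2,d)=\{\,s\in\C:\abs{s-z_1}\abs{s-z_2}\le d\,\}$, this equality says precisely that $\lambda$ lies on the boundary $\partial C(z_1,z_2,d)$, not merely inside it. Moreover $d=\abs{e_i}\,\abs{e_{n-i+1}}$ is the product of the deleted (absolute) row sums of rows $i$ and $n-i+1$ of $A$, so $C(z_1,z_2,d)$ is exactly the Cassini oval of Brauer attached to the nontrivial cycle formed by those two rows, in agreement with the discussion preceding the statement. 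It remains to treat the central eigenvalue when $n$ is odd: row $(n+1)/2$ of $A$ has no nonzero off-diagonal entry, so its deleted row sum is $0$, and the corresponding degenerate oval is $C\big(d_{(n+1)/2},d_{(n+1)/2},0\big)=\{d_{(n+1)/2}\}$, on whose boundary the eigenvalue $d_{(n+1)/2}$ trivially lies. Collecting all cases shows that every eigenvalue of $A$ lies on the boundary of one of the ovals $C\big(d_i,d_{n-i+1},\abs{e_i e_{n-i+1}}\big)$, which is the assertion.

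I do not anticipate a genuine obstacle here: Brauer's classical inclusion theorem only guarantees the inequality $\abs{s-z_1}\abs{s-z_2}\le d$ for points of the spectrum, and the whole content of the statement is that the X-shape decomposes $A$, up to a permutation similarity, into $2\times 2$ blocks, forcing that inequality to become the equality above. The points that need a little care are purely bookkeeping: matching each quadratic factor $q_i$ to the correct symmetric pair of rows and columns, handling the central entry for odd $n$ as a degenerate oval, and observing that the identity $\abs{(\lambda-z_1)(\lambda-z_2)}=\abs{\lambda-z_1}\abs{\lambda-z_2}$ uses nothing about $A$ being real beyond placing us in the customary real setting for Cassini ovals, so the argument is in fact field-agnostic. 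One should also not overclaim: the eigenvalues need not sweep out the whole boundary of an oval, so ``on the boundary'' is sharp and cannot be strengthened to ``dense in the boundary''.
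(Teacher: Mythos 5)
Your proposal is correct and follows essentially the same route as the paper: both factor the characteristic polynomial into the quadratics $(t-d_i)(t-d_{n-i+1})-e_ie_{n-i+1}$ via the X-structure and take absolute values to conclude that every root satisfies $\abs{\lambda-d_i}\,\abs{\lambda-d_{n-i+1}}=\abs{e_ie_{n-i+1}}$, i.e.\ lies on $\partial C(d_i,d_{n-i+1},\abs{e_ie_{n-i+1}})$. Your explicit handling of the odd-$n$ center entry as a degenerate oval is a slight elaboration of the paper's brief remark but not a different argument.
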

\begin{proof}
The characteristic polynomial decomposes into a product of quadratic polynomial and the possible linear part from the center element, 
if $n$ is odd. These quadratic polynomials are obtained from
\begin{equation*}
\det\left(\begin{bmatrix}t-d_i & -e_i\\ -e_{n-i+1} & t-d_{n-i+1}\end{bmatrix} \right)=(t-d_i)(t-d_{n-i+1}) - e_i e_{n-i+1}, 
\end{equation*}
the eigenvalues $\lambda_{i,1}$, $\lambda_{i,2}$ satisfy $(\lambda_i-d_i)(\lambda_i-d_{n-i+1}) = e_i e_{n-i+1}$,
so that 
\[
\lambda_{i,1}, \lambda_{i,2} \in \partial C(d_i,d_{n-i+1}, |e_i e_{n-i+1} |),\text{    } i\leq\left\lfloor\frac{n}{2}\right\rfloor.\qedhere
\]
\end{proof}
Figure \ref{fig:cassini} shows the Cassini curves for a specific matrix, namely $A=\diag(3, 3,-2,1,1,7) +\diag(2-,-5,-5,7,6,6)J$ and some derived matrices. 
Note that the shapes vary greatly between eights and ovals.
Moreover, the eigenvalues (denoted by asterisks in the graphs) are directly located on the the curves. 


\begin{remark}
Letting $D=\diag(d_i)$ and $E=\diag(e_i)$
consider $f_{n-i+1}=-d_id_{n-i+1}+e_ie_{n-i+1}$, $f_i=1$ for $i<\frac{n}{2}$, 
$g_{n-i+1}=d_i+d_{n-i+1}$, $g_i=0$ for $i<\frac{n}{2}$.
Then $Y=\diag(g)+\diag(f)J$ has the same spectrum as $X$.
Consider the matrices \begin{equation*}
X=\begin{bmatrix}  6& & &6\\ & -5 & -8 & \\ & -4 &6 & \\-5 & & &1
\end{bmatrix}, \quad C=\begin{bmatrix}  & & & 1\\ &  & 1 & \\ & 62 &1 & \\-36 & & &7\end{bmatrix}.
\end{equation*}
They have the same spectrum, but different Cassini ovals.
\end{remark} 

\section{Conclusions}
This work has investigated the family of X-matrices that shares interesting properties: any $ A \in \X $ is invariant under transposition, the sum of two elements in $ \X $ is in $ \X $ and so is their product. Thus, any matrix function obtained via a convergent Taylor series expansion maps an element of $ \X $ into an element of $ \X $. 
Also, if $ A \in \X $ is invertible, its inverse is in $ \X $. We have shown that the set of bi-simmetric  X-matrices ($ \X^b $) is a commutative subring of $\Mat(n,\K)$. Moreover, when the inverse of an element in $ \X^b $  exists, it is in $ \X^b $.

We have then seen that for any monic polynomial with coefficients in $ \mathbb{R} $, 
there exists an element of $\X  $ whose characteristic polynomial is the monic polynomial of interest. 
This is not true for diagonal matrices in $ \mathbb{R} $. Bi-symmetric X-matrices are also not rich enough 
to cover every polynomial with coefficients in $ \R $.

Regarding eigenvalue inclusion, for elements of $ \X $, eigenvalues lie on the boundary of Cassini's ovals. 

Finally, we note that shape invariance has implications in numerical linear algebra, and several families of matrices which are shape invariant under multiplication have been studied. The present family is not only shape invariant under multiplication, but under transposition as well.

\section*{Acknowledgements}
The authors wish to thank Professors Carlo Baldassi and Richard Brualdi for useful comments.

\section{Appendix: Eigenvalues, Eigenvectors of X-matrices}
The characteristic polynomial of an X-matrix is made of products of quadratic polynomials (times the matrix center root $t-a_{\frac{n+1}{2},\frac{n+1}{2}}$ if $n$ is odd). 
When equality to zero is of interest, we can then solve separately each of the quadratic equations obtaining
the system of equations, 
\begin{equation*}
\delta_i - \theta_i t +t^2=0, \qquad i=1,2,\dots,\lfloor \frac{n}{2}\rfloor
\end{equation*}
where $\delta_i=a_{i,i}a_{n-i+1,n-i+1} -a_{i,n-i+1}a_{n-i+1,i}$ and $\theta_i=a_{i,i}+a_{n-i+1,n-i+1}$ are determinant and trace, respectively, of the $2\times 2$
submatrix $\left(\begin{smallmatrix}a_{i,i} & a_{i,n-i+1}\\ a_{n-i+1,i} & a_{n-i+1,n-i+1}\end{smallmatrix}\right)$.

When $n$ is odd we have the eigenvalue $\lambda_{(n+1)/2}=a_{(n+1)/2,(n+1)/2}$.

The remaining eigenvalues are found from Vieta's formulas $\lambda_{i,1} + \lambda_{i,2} = \theta_i$, $\lambda_{i,1} \cdot \lambda_{i,2} = \delta_i$, so that
$\lambda_{i,1}= \frac{\theta_i}{2}+\frac{1}{2}\sqrt{4\delta_i-\theta_i^2}$, $\lambda_{i,2}= \frac{\theta_i}{2}-\frac{1}{2}\sqrt{4\delta_i-\theta_i^2}$.

Now, for
bi-symmetric matrices, we have $a_{i,i}=a_{n-i+1,n-i+1}$ and $%
a_{i,n-i+1}=a_{n-i+1,i}$, so that $\theta_i=2a_{i,i}$ and $\delta_i=a_{i,i}^2-a_{i,n-i+1}^2=\left(a_{i,i}-a_{i,n-i+1}\right)\left(a_{i,i}+a_{i,n-i+1}\right)$.
Thus, for bi-symmetric X-matrices, eigenvalues have a particularly simple
form, namely $\lambda_{i,1}=a_{i,i}-a_{i,n-i+1}$, $\lambda_{i,2}=a_{i,i}+a_{i,n-i+1}$,
Let us now have a look at the eigenvectors of X matrices. They are of the form $\alpha_i e_i+\beta_i e_{n-i+1}$. 
For $\lambda_{i,1}$, setting $\alpha_i= a_{i,n-i+1}$ one obtains
$\beta_i= \frac{a_{n-i+1,n-i+1}-a_{i,i}}{2}+\frac{1}{2}\sqrt{4\delta_i-\theta^2_i}$. For $\lambda_{i,2}$, setting $\beta_i= a_{n-i+1,i}$ one obtains
$\alpha_i= \frac{a_{i,i}-a_{n-i+1,n-i+1}}{2}-\frac{1}{2}\sqrt{4\delta_i-\theta^2_i}$.

Thus, for bi-symmetric matrices this calculation simplifies to the pairs $(\alpha_i,\beta_i)=(1,1)$ and $(\alpha_i,\beta_i)=(1,-1)$.\\


\end{document}